\theoremstyle{plain}
\newtheorem{theorem}{Theorem}[section]
\newtheorem{corollary}[theorem]{Corollary}
\newtheorem{proposition}[theorem]{Proposition}
\theoremstyle{definition}
\def\e{\varepsilon}
\def\vf{\varphi}
\def\a{\alpha}
\def\d{\delta}
\def\g{\gamma}
\def\o{\omega}
\def\l{\lambda}
\def\s{\sigma}
\def \RR{\mathbb R}
\def \N{{\mathbb N}}
\def \Z{\mathbb Z}
\def \P{\mathcal P}
\def\ov{\overline}
\def\om{\omega}
\def\K{\mathcal K}
\def\P{\mathcal P}
\def\QQ{\mathcal Q}
\def\LL{\mathcal L}
\def\({\biggl(}
\def\){\biggr)}
\def\<{\bold\langle}
\def\>{\bold\rangle}
\def\LL{{{\mathcal L}}}
\begin{document}

\title[Regularity]{Regularity of the drift and entropy of random walks on groups}

\author{Lorenz Gilch}
\email{gilch@TUGraz.at} 
\author{Fran\c cois Ledrappier}
  \email{ledrappier.1@nd.edu}
\address{Institut f\"ur Mathematische Strukturtheorie \\ Graz University of
  Technology\\ Steyrergasse 30/III\\ 8010 Graz, Austria}
\address{Department of Mathematics \\University of Notre Dame\\ Notre Dame, IN 46556, USA}
\subjclass[2000]{Primary 60J10; Secondary 28D20, 20E06} 
\keywords{random walks, drift, entropy, analytic}

\maketitle

Random walks on a group $G$ model many natural phenomena. A random walk is
defined by a probability measure $p$ on $G$. We are interested in  asymptotic
properties of the random walks and in particular in the linear drift and the
asymptotic entropy. If the geometry of the group  is rich, then these numbers
are both positive and the way of dependence on $p$ is itself a property of $G$. In this note, we review recent results about the regularity of  the drift and the entropy for free groups, free products and hyperbolic groups. 

\

\section{Entropy and linear drift} 

We recall in this section the main notations for the objects under
consideration associated to a
group $G$ and a probability measure $p$ on $G$. Background on random walks can be found in the survey papers \cite{KV} and \cite{V} and in the book by W. Woess (\cite{W}).
\par
Let $G$ be a finitely generated group and $S$ a symmetric finite generating set. For
$g \in G$, let $|g|$ denote the smallest $n\in\mathbb{N}$ such that $g$ can be written
as $g = s_1 \cdots s_n$, where $s_1,\dots,s_n\in S$. We denote by $d(g,h):=
|g^{-1}h| $ the left invariant associated metric. Let $p$ be a probability
measure on $G$ with support $B$. Unless otherwise specified, we always assume
that $B$ is finite and that  $\bigcup_{n\in\mathbb{N}} B^n = G$. We denote by $\P (B)$ the set of probability measures with support $B$. The set $\P(B)$ is naturally identified with an open subset of the probabilities on $B,$ which is  a contractible open polygonal bounded convex domain in $\RR^{|B|- 1}$. We form, with $ p^{(0)} $ being the Dirac measure at the identity $e$,
$$ p^{(n)} (g) \; = \; [p^{(n-1)} \star p] (g) \; = \;  \sum _{h \in G}
p^{(n-1)} (gh^{-1}) p(h),$$
where $g\in G$. Let $\varrho (p) $ be the spectral radius of the Markov operator on $\ell _2(G)$ defined by $p$. It is given by
$\varrho(p)=\limsup_{n\to\infty} p^{(n)}(e)^{1/n}$.
Define   the entropy $H_{n,p}$ and the drift $L_{n,p}$ of $p^{(n)} $ by:
$$ H_{n,p} := -\sum _{g\in G} p^{(n)}(g) \ln p^{(n)}(g), \quad L_{n,p} := \sum _{g\in G} |g| p^{(n)}(g), $$
and the asymptotic entropy $h_p$  and the  linear drift $\ell _p $ by
$$ h_p := \lim_{n\to\infty} \frac{1}{n}H_{n,p}, \quad \ell _p := \lim_{n\to\infty} \frac{1}{n} L_{n,p}. $$

Both limits exist by subadditivity and Fekete's Lemma. The linear drift makes
sense as soon as $\sum_{g\in G} |g| p(g)  < +\infty $, the entropy under the
slightly weaker condition $H_{1,p} < +\infty .$ The entropy $h_p$ was
introduced by Avez (\cite{Av}) and is related to bounded solutions of the
equation on $G$ of the form $f(g) = \sum_{h \in G} f(gh) p(h)$ (see
e.g. \cite{KV}). In particular, $h_p = 0 $ if and only if the only bounded
solutions (called \textit{harmonic} functions) are the constant functions (\cite{KV}, \cite{De2}). A general relation is (\cite{Gu}) \begin{equation}\label{fundamental} h_p\;  \leq \;  \ell _p v,\end{equation}
where $v := \lim_{n\to\infty} \frac{1}{n} \ln \left(\# \{g\in G ; |g| \leq n
  \}\right) $ is the {\it {volume entropy}} of $G$. In particular, if $\ell
_p = 0 $ then $h_p = 0 $. 

We say that $p$ is symmetric if $B = B^{-1} $ and $p(g) = p(g^{-1} ) $ for all
$g \in B$. We call $p$ centered if $\sum_{g\in B} \chi(g) p(g) = 0 $ for all
group morphisms $\chi:G\to\RR$. Clearly, symmetric probabilities are
centered. If $p$ is centered and $h_p = 0$, then $\ell _p = 0$ (\cite{Va},
\cite{Mat}). If $p$ is not centered, we may have $h_p = 0 $ and $\ell _p \not =
0 $, for instance on $\Z$. If this is the case, there is a group morphism
$\chi:G\to\RR$ such that $\ell _p = \sum _{g\in B} \chi (g) p(g)$ (\cite{KL}, see also \cite {EKn} for finite versions of this result). 

\

Both $h_p $ and $\ell _p $ describe asymptotic properties of the random
    walk directed by $p$. Let $(\Omega , P) = (G^\N, p^{\otimes \N})$ be the
infinite product space so that    $ \o = (\o_1,\o_2,\ldots )\in G^\N$ is
realized by a sequence
of i.i.d. random variables with values in $G$ and distribution $p$. We form the right random walk by
$X_n (\o) := \o_1\o_2 \cdots \o_n.$ The probability $p^{(n)} $ is the
distribution of $X_n $, and an application of Kingman's subadditive ergodic
theorem (\cite{Ki}) gives that, for $P$-a.e. $\o$, 
\begin{equation}\label{kingman}\lim _{n\to\infty}  \frac{1}{n} |X_n| \; = \; \ell _p \quad {\textrm {and}} \quad \lim _{n\to\infty}  -\frac{1}{n} \ln \big( p^{(n)} (X_n) \big) \; = \; h_p. \end{equation}

The random walk is said to be recurrent if, for $P$-a.e. $\o$,  there is  a
positive $n\in\mathbb{N}$ with $X_n (\o) = e$. In this case there is an
infinite number of integers $n$ with $X_n = e$ and, by (\ref{kingman}), $\ell_p = 0
$. Hence, $h_p = 0 $. From here on, we  assume that the random walk is
transient, i.e. $|X_n| \to \infty $ for $P$-a.e. $\o$. The \textit{Green function}
$G(g,h)$, $g,h\in G$, is defined by 
$$ G(g,h) \; := \; \sum _{n \geq 0 } p^{(n)} (g^{-1}h).$$ 
By decomposing of the first visit to $h$ and using transitivity of the random walk we get 
$$
G(g,h) \; = \; F(g,h) G(h,h) \; = \; F(g,h) G(e,e) , 
$$
where $F(g,h)=\sum_{n\geq 1} f^{(n)}(g,h) $ with $f^{(n)}(g,h)$ being the
probability of reaching $h$ for the first time after $n$ steps when starting at $g$. If $p$ is
symmetric, then the (left invariant) Green distance is defined by $d_G(g,h)  := - \ln F(g,h) $. The drift $\ell _{p,G} $ for that distance  coincides with the entropy $h_p$  (\cite{BP}, Proposition 6.2, \cite{BHM1}) and the volume entropy is 1, so that there is equality in (\ref{fundamental}) for that distance (\cite{BHM1}).

\

We now turn to another representation of the drift and entropy.
Let $X$ be a compact space. $X$ is called a $G$-space if the group $G$ acts by homeomorphisms on $X$. This action extends naturally to probability measures on $X$. We say that the measure $\nu $ on $X$ is stationary if $\sum _{g \in G}( g_\ast \nu ) p(g) = \nu .$ The {\it {entropy}} of a stationary measure $\nu $ is defined by
\begin{equation}\label{entr} h_p (X,\nu ) \; := \; - \sum _{g \in G} \left( \int _{X}\ln  \frac {dg^{-1}_\ast \nu }{d\nu } (\xi) d\nu(\xi)\right) p(g). \end{equation}
The entropy $h_p$ and the linear drift $\ell _p$ are given by variational formulas  over stationary measures (see \cite {KV} for the entropy, \cite {KL} for the linear drift): 
\begin{eqnarray}\label{entropy}
h_p &=& \max \{ h_p (X ,\nu ); X \; G{\textrm {-space and }} \nu {\textrm { stationary on }} X \},\\ \label{escape} \ell _p & = &\max \big\{\sum _{g \in G}\left( \int _{\ov G} \xi(g^{-1}) d\nu (\xi)\right) p(g); \nu {\textrm { stationary on }} \ov G \big\},
\end{eqnarray}
where $\ov G$ is the Busemann compactification of $G$, the elements of which are horofunctions $\xi $ on $G$. A pair $(X,\nu )$, where $X$ is a $G$-space  and $\nu $ a $p$-stationary measure is called a {\it { boundary }} if, for $P$-a.e.$\om $,  $(X_n(\om ) )_\ast \nu $ converge towards a Dirac measure. It is called a {\it {Poisson boundary }} if it is a boundary and it realizes the maximum in formula (\ref{entropy}).

\

From the definition of $\ell _p$ and $h_p$, one sees that the mappings $p
\mapsto \ell_p$ and $p\mapsto h_p$ are uppersemicontinuous on $\P(B)$. Erschler (\cite{E}) raised the question of continuity of these functions and gave examples where these mappings  are not continuous on the closure of $\P(B)$. The question of continuity in general on the interior of $\P(B)$ is open. In the rest of the paper, we discuss several examples where one can prove stronger regularity results.

\

\section{Nearest neighbour random walks on a free group}

In the case when the group $G$ is a \textit{free group} with $d$ generators, $d \geq 2$, and $p$ is supported by these generators, explicit computations can be made  (see \cite{DM}).

Let $G$ be the free group with set of generators $S = \{ \pm i; i = 1,\dots, d
\}, $ where  $-i = i^{-1} $ for $i\in  S$. Let  $\P(S) $ be the set of
probability measures on $G$ with support $S$.
Since $d \geq 2$, as $n$ goes to infinity, the reduced word representing
$X_n(\o)$ converges towards an infinite reduced word $X_\infty (\o ) = s_1(\o )
s_2 (\o) \cdots $ with $s_j (\o ) \not = -s_{j+1} (\o) .$
Denote by $G_\infty$ the space of infinite reduced words. The stationary
measure is unique: it is the distribution $\nu $ of $X_\infty (\o)$. Then $(G_\infty,\nu)$ is  the Poisson boundary and $G_\infty $ is  the Busemann boundary of $G$. 
Let $q_i = P\bigl( \{ \o ;s_1(\o ) = i \}\bigr) = \nu ([i])$, where $[i]$ consists of all
infinite words in $G_\infty$ starting with letter $i\in S$. We have $\sum _{i
  \in S} q_i = 1$. Let $i_1\dots
i_k$ be a reduced word in $G$. Then $\nu$ is uniquely
determined by the values $\nu([i_1\dots i_k])=F(e,i_1\dots i_k)(1-q_{-i_k})$, where
$F(e,i_1\dots i_k)$ is the probability of hitting $i_1\dots i_k$ when starting at the
identity $e$.
 Formula  (\ref{escape}) writes:
$$ \ell _p = 1 - 2 \sum _{i\in S} p_i q_{-i} ,$$
where $p_i = p(i) .$ In order to write the formula for the entropy, we introduce
$z_i : = F(e, i)$ for $i \in S$. The density $\displaystyle  \frac {dg^{-1}_\ast \nu }{d\nu }
(\xi)$ gives the minimal positive harmonic function with pole at
$\xi=i_1i_2\ldots\in G_\infty$. The Green function satisfies the following multiplicative
structure:
$$
G(e,i_1\dots i_k)=F(e,i_1)G(i_1,i_1\dots i_k)=F(e,i_1)G(e,i_2\dots i_k).
$$
This yields together with \cite[Theorem 2.10]{Le}
$$  
\frac {di^{-1}_\ast \nu }{d\nu } (\xi) \; = \; 
\lim_{k\to\infty} \frac{G(-i,i_1\dots i_k)}{G(e,i_1\dots i_k)}
=
\begin{cases}
z_{i}, & \textrm { if } i_1 \not = -i,\\
z_{-i}^{-1}, & \textrm { if } i_1 = -i.
\end{cases}
$$
 Formula  (\ref{entropy}) writes:
$$ 
h_p = \sum _{i \in S} p_i \big[ q_{-i} \ln z _{-i} - (1-q_{-i} )\ln z _i
 \big].
$$

\

 We can express the $q_i$ in terms of the $p_i$, and vice versa, thanks to the
 {\it {traffic equations}}: using the Markov property, we can write:
$$ z_i \; = \; p_i + z_{i} \sum _{j \in S \setminus\{i\} } p_j z _{-j}  \quad
{\textrm {and}} \quad  q_i \; = \; z_i (1- q_{-i}) . $$ 
The last equation arises from the formula given for $\nu([i])$ above.
Setting $Y := \sum _{j\in S} p_j z_{-j}$, we get 
$$ p_i = \frac {z_i (1-Y)}{1- z_i z_{-i} } \quad {\textrm {and}} \quad z_i \; = \; \frac{q_i } {1-q_{-i}} ,$$ so that we find:
$$ \ell_p \; = \; 1- \frac{2}{A} \sum _{i\in S} \frac {q_iq_{-i}
  (1-q_i)}{1-q_i-q_{-i} },  \quad {\textrm {where}} \quad A \; = \;
(1-Y)^{-1}\;  = \; 1 +  \sum _{i\in S} \frac {q_iq_{-i}}{1-q_i -q_{-i}},$$
which writes:
$$ \ell _p = \frac {B}{A}, 
{\textrm { where } } B:= 1 - \sum_{i\in S}  \frac{q_i q_{-i}(1-2q_i)}{1-q_i-q_{-i}} .$$

Hence, in terms of the $q_i, i \in S$, $p_i$ and $\ell _p $ are rational,  and the expression of $h_p$ involves rational functions and $\ln q_i, \ln (1-q_i)$. 
\begin{proposition}\label{analfree} The mappings $p\mapsto \ell_p $ and $p\mapsto h_p $ are real analytic on $\P(S).$ \end{proposition}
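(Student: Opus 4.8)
The plan is to reduce everything to the real-analytic dependence of the first-passage functions $z_i=F(e,i)$ on $p$, and then read off $\ell_p$ and $h_p$ from the explicit formulas. Once $p\mapsto(z_i)_{i\in S}$ is known to be real analytic on $\P(S)$, solving the linear relations $q_i=z_i(1-q_{-i})$ gives $q_i=z_i(1-z_{-i})/(1-z_iz_{-i})$, which is real analytic because the denominators $1-z_iz_{-i}$ never vanish (as $z_i,z_{-i}\in(0,1)$). The remaining denominators are harmless: $1-q_i-q_{-i}>0$ since $q_i+q_{-i}=\nu([i])+\nu([-i])<1$ when $d\ge2$, and $A\ge1$. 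Finally $z_i\in(0,1)$ makes each $\ln z_i$ real analytic, so both $\ell_p=1-2\sum_{i\in S}p_iq_{-i}$ and $h_p=\sum_{i\in S}p_i[q_{-i}\ln z_{-i}-(1-q_{-i})\ln z_i]$ are real analytic. Thus the whole statement rests on the single claim that $p\mapsto z$ is real analytic.

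To prove that claim I would apply the real-analytic implicit function theorem to the traffic equations. Set $\Phi_i(z,p):=z_i-p_i-z_i\sum_{j\in S\setminus\{i\}}p_jz_{-j}$, a polynomial (hence real-analytic) map; the probabilistic solution $z=(F(e,i))_{i\in S}\in(0,1)^S$ satisfies $\Phi(z,p)=0$. It then suffices to show that the Jacobian $\partial\Phi/\partial z$ is invertible at this solution, for then the theorem produces a real-analytic local solution $p\mapsto z(p)$ which, by local uniqueness, coincides with the probabilistic one. This non-degeneracy of the Jacobian is the only genuine obstacle; everything else is bookkeeping.

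I expect to compute the Jacobian explicitly. Writing the traffic equation as the fixed point $z_i=T_i(z):=p_i/(1-\sum_{j\ne i}p_jz_{-j})$ and using $1-\sum_{j\ne i}p_jz_{-j}=p_i/z_i$ at the solution, a direct differentiation gives $\partial\Phi/\partial z=\mathrm{diag}(p_i/z_i)\,(I-DT)$, where $DT\ge0$ has entries $(DT)_{ik}=(z_i^2/p_i)\,p_{-k}$ for $k\ne-i$ and $0$ for $k=-i$. So I only need $\det(I-DT)\ne0$. The key observation is the rank-one-plus-flip structure $DT=ab^{\top}-\mathrm{diag}(z_i^2)\,\Pi$, where $a_i=z_i^2/p_i$, $b_k=p_{-k}$, and $\Pi$ is the permutation $i\mapsto-i$. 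By the matrix-determinant lemma, $\det(I-DT)=\det\!\big(I+\mathrm{diag}(z_i^2)\Pi\big)\cdot\big(1-b^{\top}(I+\mathrm{diag}(z_i^2)\Pi)^{-1}a\big)$, and the first factor is block diagonal over the pairs $\{i,-i\}$, equal to $\prod_{\{i,-i\}}(1-z_i^2z_{-i}^2)>0$.

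It remains to show the scalar factor is nonzero, and here the paper's identity does the work. Since $p_i=z_i(1-Y)/(1-z_iz_{-i})$, one has $p_{-i}/p_i=z_{-i}/z_i$, and the scalar collapses to $b^{\top}(\,\cdot\,)^{-1}a=\sum_{i\in S}t_i/(1+t_i)=:g(1)$ with $t_i:=z_iz_{-i}\in(0,1)$. Using $\sum_{i\in S}q_i=1$ together with $q_i+q_{-i}=(z_i+z_{-i}-2t_i)/(1-t_i)$, I would pair the two indices of each generator to obtain $1-g(1)=\sum_{\{i,-i\}}\big[(z_i+z_{-i})(1+t_i)-4t_i\big]/(1-t_i^2)$, and each summand is strictly positive by AM--GM, since $(z_i+z_{-i})(1+t_i)\ge 2\sqrt{t_i}\cdot2\sqrt{t_i}=4t_i$ with strict inequality because $t_i<1$. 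Hence $\det(I-DT)>0$, the Jacobian is invertible, and the implicit function theorem applies. Equivalently, $1-g(1)>0$ says $\rho(DT)<1$, the expected reflection of transience, and the AM--GM step is exactly what makes this elementary.
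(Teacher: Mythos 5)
Your reduction to the analyticity of $p\mapsto (z_i)_{i\in S}$ is the same first move as the paper's, and your linear algebra is correct: at the probabilistic solution the Jacobian of the traffic equations factors as $\mathrm{diag}(p_i/z_i)\,(I-DT)$, the decomposition $DT=ab^{\top}-\mathrm{diag}(z_i^2)\,\Pi$ is right, and the evaluation
$1-b^{\top}\bigl(I+\mathrm{diag}(z_i^2)\Pi\bigr)^{-1}a=\sum_{\{i,-i\}}\bigl[(z_i+z_{-i})(1+t_i)-4t_i\bigr]/(1-t_i^2)>0$
checks out, using $p_{-i}/p_i=z_{-i}/z_i$ and $\sum_{i}q_i=1$, both of which do hold at the probabilistic solution. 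This is a genuinely different route from the paper, which never looks at a Jacobian: there, $z_i$ is written as a multivariate power series in the $p_j$'s with nonnegative coefficients (a sum over paths), dominated by the Green series $G(e,i|z)$, whose radius of convergence exceeds $1$ because $\varrho(p)<1$; convergence at $(1+\delta)p$ then gives analyticity on a polydisc around $p$.

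However, there is a genuine gap at the hinge of your argument: the phrase ``by local uniqueness, [the implicit-function-theorem branch] coincides with the probabilistic one.'' The implicit function theorem gives uniqueness only among solutions lying in a neighbourhood $V$ of $z(p_0)$; to conclude that the probabilistic solution $z(p)$ agrees with the analytic branch for $p$ near $p_0$, you need to know $z(p)\in V$, i.e.\ that $p\mapsto z(p)$ is \emph{continuous} at $p_0$. This is not free, and it is exactly where transience must be used quantitatively: the traffic equations always admit the spurious solution $\mathbf{1}$ (all coordinates equal to $1$), so the solution set is not a single branch, and the only regularity of $z(p)$ that comes for free is lower semicontinuity (it is an increasing limit of the polynomials ``hit $i$ within $N$ steps''; the complementary monotone approximation again gives only lower semicontinuity). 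Upper semicontinuity requires a locally uniform tail bound on $\sum_n f^{(n)}(e,i)$, and the natural source of such a bound is domination by the Green series at argument $1+\delta$ using $\varrho(p_0)<1$ --- which is precisely the paper's argument, and once you have it, it delivers analyticity outright and makes the Jacobian computation superfluous. The alternative repair is a global uniqueness statement: for each $p$, the probabilistic solution is the \emph{only} solution in $[0,1)^S$ (in the spirit of extinction probabilities of irreducible multitype branching processes); since the analytic branch stays in $(0,1)^S$ near $p_0$, this would close the argument and keep your proof genuinely independent of the paper's. But that is a real additional theorem, which your proposal neither states nor proves.
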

\begin{proof} Since all formulas are explicit in terms of the
  $q_i$, we only have to check that the $q_i$ are  real analytic functions on
  $\P(S)$. First, since $z_i$ can be seen as a sum of probabilities of
  different paths in $G$
we can write $z_i$ as a power series in terms of the $p_i$'s
  and the additional variable $z\in\mathbb{C}$ (contained in a suitable domain
  of convergence),
  namely as
$$
z_i(z) = \sum_{(n_1,\dots,n_{2d})\in\mathbb{N}^{2d}} c(n_1,\dots,n_{2d}) 
p_1^{n_1}p_{-1}^{n_2}p_2^{n_3}p_{-2}^{n_4}\cdot\ldots \cdot p_d^{n_{2d-1}}p_{-d}^{n_{2d}} z^{n_1+\dots +n_{2d}},
$$
where $c(n_1,\dots,n_{2d})\geq 0$. Note that $z_i=z_i(1)$. Since the spectral
radius is strictly smaller than $1$ (see e.g. \cite[Cor. 12.5]{W}), the power
series $G(e,i|z)=\sum_{n\geq 0} p^{(n)}(i)z^n$ has radius of convergence
strictly bigger than $1$ and satisfies $G(e,i|z)\geq z_i(z)$ for all real
$z>0$. That is, for each $p\in\mathcal{P}(S)$, $z_i(z)$ has radius of
convergence $R_i>1$. Choose now any $\delta>0$ with
$1+\delta <R_i$. Then
\begin{eqnarray*}
z_i&= &z_i(1)\leq z_i(1+\delta) \\
&=& \sum_{(n_1,\dots,n_{2d})\in\mathbb{N}^{2d}} c(n_1,\dots,n_{2d}) 
\bigl((1+\delta)p_1\bigr)^{n_1}\cdot\ldots \cdot \bigl((1+\delta)p_{-d}\bigr)^{n_{2d}}<\infty.
\end{eqnarray*}
In other words, $z_i=z_i(1)$ is real analytic in a neighbourhood of any $p\in\mathcal{P}(S)$.
The equations
$q_i=z_i(1-q_{-i}), q_{-i}=z_{-i}(1-q_i)$
give
$$
q_i=\frac{z_i(1-z_{-i})}{1-z_iz_{-i}},
$$
and this finishes the proof.
\end{proof}

Observe that, for $d= 1$, the group $G$ is $\Z$, $ S = \{\pm 1\}$ and  $p \mapsto \ell _p = | p_1 - p_{-1}| $ is not a real  analytic function on $\P(\pm 1).$

\

The formulas are even simpler when the probability $p$ is symmetric. Let $\P_\s (S) $ be the set of symmetric probability measures on $S$; elements of $\P_\s(S)$ are described by $d$ positive numbers $\{p_1, \cdots , p_d \}$ such that $\sum _{i=1}^d p_i = 1/2.$ If $p \in \P_\s (S)$,  $q_i = q_{-i}$ and we have:
\begin{eqnarray*}  
\ell _p &=& \frac {B}{A} {\textrm { with }}  A = 1 +2  \sum
  _{i=1}^d \frac {q_i^2}{1-2q_i}  {\textrm { and }}  B= 1 - 2\sum_{i=1}^d
  q_i^2, \\  h_p &=& -\frac{2}{A} \sum_{i=1}^d q_i (1-q_i ) \ln \frac{q_i
  }{1-q_i} , {\textrm {whereas}} \\ p_i &=& \frac{q_i (1-q_i) } {A
    (1-2q_i)}\quad \textrm{ for }i=1,\dots,d.
\end{eqnarray*}

\begin{proposition}\label{maximum}  The functions $p \mapsto \ell _p$ and
  $p\mapsto h_p  $ reach their  maxima on $\P_\s(S)$ at the constant vector
  $p_0 = (1/2d, \ldots , 1/2d )  $  and 
$$
\ell _{p_0} = 1 - \frac{1}{d}, \quad h_{p_0} = \bigl(1-\frac{1}{d}\bigr) \ln
(2d-1).
$$
\end{proposition}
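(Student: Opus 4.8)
The plan is to pass to the coordinates $q_i$, in which all quantities have already been made explicit, and to recognise both maximisations as instances of Jensen's inequality. The first step is to rewrite the constraint $\sum_{i=1}^d p_i = 1/2$. Summing the relation $p_i = \frac{q_i(1-q_i)}{A(1-2q_i)}$ over $i$ and using the elementary identity $\frac{q_i(1-q_i)}{1-2q_i} = \frac{q_i^2}{1-2q_i} + q_i$ together with $A = 1 + 2\sum_i \frac{q_i^2}{1-2q_i}$, I find that this constraint is equivalent to the strikingly simple condition $\sum_{i=1}^d q_i = 1/2$. Since the correspondence $p \mapsto (q_1,\dots,q_d)$ maps $\P_\s(S)$ into the open simplex $\Delta = \{q : q_i > 0,\ \sum_i q_i = 1/2\}$, sends $p_0$ to the barycentre $q_i = 1/(2d)$, and admits the displayed formula for $p_i$ as inverse, it suffices to maximise the two expressions for $\ell_p$ and $h_p$ over $\Delta$, where each $q_i \in (0,1/2)$.

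For the drift I write $\ell_p = B/A$ with $B = 1 - 2\sum_i q_i^2 > 0$ and $A = 1 + 2\sum_i g(q_i) > 0$, where $g(q) := \frac{q^2}{1-2q}$; both are symmetric separable functions on $\Delta$. The numerator $B$ is maximised at the barycentre because $q \mapsto q^2$ is strictly convex, while the denominator $A$ is minimised there provided $g$ is strictly convex on $(0,1/2)$, which I verify through the computation $g''(q) = \frac{2}{(1-2q)^3} > 0$. Since at an arbitrary point of $\Delta$ one has $B \le B(p_0)$ and $A \ge A(p_0)$ with $B, A > 0$, it follows that $\ell_p \le B(p_0)/A(p_0)$, with equality only at $p_0$.

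For the entropy I write $h_p = \frac{2}{A}\sum_i \psi(q_i)$ with $\psi(q) := q(1-q)\ln\frac{1-q}{q}$, which is positive on $(0,1/2)$. Here Jensen must be applied in the opposite direction: I claim $\psi$ is strictly concave on $(0,1/2)$, so that $\sum_i \psi(q_i)$ is maximised at the barycentre, while $A$ is again minimised there by the previous step. Concavity follows from the computation $\psi''(q) = -2\ln\frac{1-q}{q} - \frac{1-2q}{q(1-q)}$, both summands being negative on $(0,1/2)$. Since numerator and denominator are both positive and extremised at the same symmetric point, combining the two properties gives $h_p \le h_p(p_0)$.

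The remaining step is the evaluation at the barycentre $q_i = 1/(2d)$, a direct substitution: one obtains $B = 1 - 1/(2d)$ and $A = (2d-1)/(2(d-1))$, hence $\ell_{p_0} = 1 - 1/d$, while $\ln\frac{q_i}{1-q_i} = -\ln(2d-1)$ yields $h_{p_0} = (1 - 1/d)\ln(2d-1)$. I expect the main obstacle to be the entropy, whose non-separable factor $1/A$ blocks a pure Jensen argument; the resolution is precisely that the positive numerator and the positive denominator are extremised at the same symmetric point, and the two sign computations for $g''$ and $\psi''$ form the technical heart of the argument.
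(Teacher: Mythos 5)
Your proof is correct, and it takes a genuinely different route from the paper's. For the drift, the paper runs a Lagrange multiplier analysis: it shows that every critical-point equation reduces to a single cubic $G(A,B,\lambda,q)=0$ having at most one root in $[0,1/2)$, so the symmetric vector is the only interior critical point, and it must then argue separately --- by continuity of the formula up to the boundary of the simplex and induction on $d$ --- that this critical point is a maximum. Your argument replaces all of this by two applications of Jensen's inequality on $\{q_i>0,\ \sum_i q_i=1/2\}$: the numerator $B=1-2\sum_i q_i^2$ is maximised at the barycentre by strict convexity of $q\mapsto q^2$, the denominator $A=1+2\sum_i q_i^2/(1-2q_i)$ is minimised there by strict convexity of $g(q)=q^2/(1-2q)$ (your computation $g''(q)=2/(1-2q)^3>0$ is correct), and positivity of both lets you divide the two bounds. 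This is more elementary, dispenses with the boundary/induction step, and yields uniqueness of the maximiser as a by-product. For the entropy the divergence is greater: the paper does no computation at all, but instead invokes the fundamental inequality (\ref{fundamental}) with volume entropy $v=\ln(2d-1)$, so that $h_p\le \ell_p v\le (1-1/d)\ln(2d-1)=h_{p_0}$ and the drift statement forces the entropy statement. Your direct argument via strict concavity of $\psi(q)=q(1-q)\ln\frac{1-q}{q}$ on $(0,1/2)$ (your expression for $\psi''$ and its sign are correct, as is the positivity of $\psi$ needed to compare the ratios), combined with the same lower bound on $A$, is valid and self-contained; what it misses is the structural observation that $p_0$ saturates (\ref{fundamental}), but what it gains is independence from the drift result and from knowing the volume entropy. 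Your closing evaluations $B=1-1/(2d)$, $A=(2d-1)/(2(d-1))$, the resulting values of $\ell_{p_0}$ and $h_{p_0}$, and the equivalence of the constraints $\sum_i p_i=1/2$ and $\sum_i q_i=1/2$ all check out.
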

\begin{proof} 

 By symmetry, the constant vector $p_0$  is a critical point for $\ell _p$. At $p_0$, $q_i = 1/2d$ by symmetry and  $\ell_{p_0} = 1 - 1/d, h_{p_0}= (1-1/d )\ln (2d-1)$  by the formulas above (observe that these expressions are also valid for $d=1$:  the only point of $\P_\s (\pm 1) $ is $(1/2, 1/2)  $, for which $\ell = 0 = 1- 1/d$ and $h = 0 = (1-1/d) \ln (2d-1)$).  Moreover,  the volume entropy of $G$ is $\ln (2d -1)$. By (\ref{fundamental}), the result for $\ell _p$ implies that $(1-1/d) \ln (2d-1)$ is the maximal value that the entropy might take on $\P_\s(S)$. Since this  is the entropy $h_{p_0} $, $p_0$ achieves the maximum of the entropy as well.
 
 We are going to prove  that the function $\displaystyle (q_1, \ldots ,q_d) \mapsto  B/A $  has a unique critical point on the set $\{(q_1, \ldots ,q_d); q_j > 0, \sum _{j=1}^d q_j = 1/2 \} .$
Observe that the formula for $\ell _p $ is continuous on the domain  $0 \leq q_i \leq 1/2 $  and that  the value of $\ell _p $ at the boundary of the domain $\{(q_1, \ldots ,q_d); q_j > 0, \sum _{j=1}^d q_j = 1/2 \} $
 is  the one computed with only the non-zero $q_i$'s on a free group with a smaller set of generators. Since at the constant vector $p_0$, $\ell _{p_0} = 1- 1/d $, it follows, by induction on the dimension, that the critical point $p_0$ is a maximum. The proof for $d=2$ is the same as in the general case: there is only one critical point by the argument below and the limit of the expression for $\ell _p$ at $(0,1/2), (1/2, 0)  $ is $0$.

 Using a Lagrange multiplier, we are looking for the critical points of the function $F(q, \l ) = \ell _p - \l (\sum_{j=1}^d q_j -1/2)$ satisfying $0\leq q_j \leq 1/2$ for $j = 1, \ldots, d.$ Setting as above $$A = 1 +2  \sum _{i=1}^d \frac {q_i^2}{1-2q_i} \quad {\textrm {and}} \quad B= 1 - 2\sum_{i=1}^d q_i^2 , $$
all equations $\frac{\partial F}{\partial q_i} = 0 $ depend only on $A,B, \l $ and $q_i$. 

Indeed,  they write $G(A,B,\l, q_i) = 0 $, where:
$$ G(A,B,\l, q) = 16 A q^3 + 4q^2 ( \l A^2 -4 A-B) +4q ( -\l A^2 +A +B) +\l A^2 . $$
If, for fixed $A,B,\l$, the equation  $G(A,B,\l, q_i) = 0 $ has only one solution $q \in [0,1/2) $, then, for these values of $A,B,\l$, the only possible critical point of $F$ is $q_j = 1/2d $ for all $j$. Then, unless $A = \frac{2d-1}{2d-2}, B= \frac{2d-1}{2d}$, there is no critical point for $F$ with those values of $A,B$.

 To summarize,  we only have to verify that the equation  $G(A,B,\l, q_i) = 0 $ has at most  one solution $q \in [0,1/2)$ for all $A,B,\l$ with $0 < B < 1 < A $.

 The function  $q \mapsto  G(A,B,\l, q) $ is a third degree polynomial with positive highest coefficient, $1/2$ is a critical point and $G(A,B,\l, 1/2 ) = B > 0$. Therefore, there is at most one solution  $q \in [0,1/2) $. \end{proof}
Let us mention the recent article \cite[Corollary 2.7]{GMM}, where bounds for
$\ell_p$ and $h_p$ are provided which are equal to the values of $\ell_{p_0}$
and $h_{p_0}$ in our context of free groups.
 It is likely that $p_0$ gives also the maximum of the entropy on the whole $\P(S)$, but we do not have  a proof of that fact. We also conjecture that the
 mapping $p\mapsto \ell_p$ is a concave function; numerical computer calculations of the drift for
 small $d\in\mathbb{N}$ supports and confirms this conjecture, but we do not
 have a proof for general $d$.

\section{Free products, Artin dihedral groups  and braid groups}
 
 The computations in Section 2 have been known for fifty years (even if Proposition  \ref{maximum} seems to be formally new). There are few other examples where it is possible to describe geometrically the Poisson boundary and the Busemann boundary, and it is even rarer to be able to give useful formulas for the stationary measure. In this section, we review the examples we are aware of.
\par
One important concept of constructing new groups from given ones is the free
product of groups. The crucial point is that free products have a tree-like
structure. More precisely, suppose we are given finitely generated groups $G_1,\dots, G_r$ equipped with
finitely supported probability measures $p_1,\dots,p_r$. The identity of $G_i$
is denoted by $e_i$, and w.l.o.g. we assume that these groups are pairwise
disjoint and we exclude the case $r=2=|G_1|=|G_2|$ (this case leads to
recurrent random walks in our setting). The free product
$G_1\ast \dots \ast G_r$ is given by 
$$
G=\ast_{i=1}^r G_i=\bigl\lbrace
x_1x_2 \dots x_n \bigl| x_j\in\bigcup_{i=1}^r G_i\setminus\{e_i\}, x_j\in G_k
\Rightarrow x_{j+1}\notin G_k
\bigr\rbrace\cup \{e\},
$$
the set of finite words over the alphabet $\bigcup_{i=1}^r G_i\setminus\{e_i\}$
such that two consecutive letters do \textit{not} come from the same group
$G_k$, where $e$ describes the empty word. A group operation on $G$ is given by
concatenation of words with possible contractions and cancellations in the middle such that
one gets a reduced word as above. For $x=x_1\dots x_n\in G$, define the \textit{block
  length} of $x$ as $\Vert x\Vert:=n$. Let us mention that the free group with
$d$ generators is a free product $\mathbb{Z}\ast\dots \ast\mathbb{Z}$ with $k$
free factors $\mathbb{Z}$.
\par
A random walk on $G$ is constructed in a natural way as follows: we lift $p_i$
to a probability measure $\bar p_i$ on $G$: if
$x=x_1\dots x_n\in G$ with $x_n\notin G_i$ and $v,w\in G_i$,
then $\bar p_i(xv,xw):=p_i(v,w)$. Otherwise we set $\bar p_i(x,y):=0$. Choose
$0<\alpha_1,\dots,\alpha_r\in\mathbb{R}$ with $\sum_{i=1}^r \alpha_i =
1$. Then we obtain a new probability measure on $G$ defined by
$$
p=\sum_{i=1}^r \alpha_i \bar p_i
$$
with $B=\mathrm{supp}(p)=\bigcup_{i=1}^r \mathrm{supp}(p_i)$.
We consider random walks $(X_n)_{n\in\mathbb{N}_0}$ on $G$ starting at
$e$, which are governed by $p$. For $i\in\{1,\dots,r\}$, denote by $\xi_i$ the
probability of hitting the set $G_i\setminus\{e_i\}$ when starting at $e$. The
spectral radius $\varrho(p)$ is strictly less than $1$ due to the non-amenability of $G$.
Let $\partial G_i$ be the Martin boundary of $G_i$ with respect to $p_i$, and
denote by $G_\infty$ the set of infinite words $x_1x_2\dots$ such that $x_i\in
G_k$ implies $x_{i+1}\notin G_k$. Then the Martin boundary of $G$ is given by
$$
\partial G = G_\infty \cup \bigcup_{i=1}^r \{x \xi ; x=x_1\dots x_n\in G,x_n\notin
G_i,\xi\in \partial G_i\};
$$
see e.g. \cite[Proposition 26.21]{W}. The random walk on $G$ converges almost
surely to an infinite word in $G_\infty$ in the sense that the common prefix of
the word at time $n$ and the infinite limit word tends to infinity as $n\to\infty$. The limit distribution $\nu$ is determined
by
$$
\nu\bigl(\{x_1x_2\dots \in G_\infty ; x_1=y_1,\dots,x_n=y_n\})=F(e,y_1\dots y_n)\bigl(1-(1-\xi_i)G_i(\xi_i)\bigr),
$$
where $n\in\mathbb{N}$, $y_1\dots y_n\in G$ with $y_n\notin G_i$, $F(e,y_1\dots y_n)$ being the probability of hitting $y_1\dots y_n$ and
$G_i(z)=\sum_{n\geq 0} p_i^{(n)}(e_i)z^n$ with $z\in\mathbb{C}$; see e.g. \cite{Gi1}.
\par
The
next propositions summarize results about regularity of drift and
entropy. Explicit formulas can be found in the cited sources.
\begin{proposition}[\cite{Gi1}]\label{prop:freeproduct-drift}
The drift w.r.t. the block length $\ell_{B}  = \lim_{n\to\infty}
\frac{1}{n}\Vert X_n\Vert$ exists and varies real analytically
in $p\in \mathcal{P}(B)$.
\end{proposition}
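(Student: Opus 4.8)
The plan is to split the statement into its two assertions — existence of the limit $\ell_B$, which I would obtain from soft subadditivity, and real analyticity in $p$, which is the substantive part and which I would treat exactly in the spirit of Proposition~\ref{analfree}. For existence, note first that the block length is subadditive, $\Vert xy\Vert\leq\Vert x\Vert+\Vert y\Vert$, since concatenating two reduced words can only create cancellations in the middle; moreover a single step multiplies $X_n$ by an element of a single factor $G_i$, so $\bigl|\Vert X_{n+1}\Vert-\Vert X_n\Vert\bigr|\leq 1$ and $\Vert X_n\Vert\leq n$ is integrable. Writing $a_{m,n}:=\Vert X_m^{-1}X_n\Vert=\Vert\omega_{m+1}\cdots\omega_n\Vert$, one has $a_{m,n}=a_{0,n-m}\circ\theta^m$ for the shift $\theta$ on $(\Omega,P)$, together with $a_{0,n}\leq a_{0,m}+a_{m,n}$, so Kingman's subadditive ergodic theorem (\cite{Ki}) yields a deterministic limit $\ell_B=\lim_n\frac1n\Vert X_n\Vert$ $P$-almost surely, just as in (\ref{kingman}) for the word metric.

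For analyticity I would use the regeneration structure of the walk on the free product from \cite{Gi1}. Because the factors are glued in a tree-like way, $X_n$ almost surely finalizes its successive blocks one after another; tracking the \emph{exit points} (the last positions from which the walk leaves a given finalized prefix for good) produces a Markov renewal process whose internal type records only the factor $G_i$ of the last block, hence has finitely many states. The rate of escape is then a ratio of two expectations over this finite chain — the mean block-length increment per exit epoch over the mean number of steps per exit epoch — and both the transition probabilities of the chain and these expectations are explicit expressions built from the hitting probabilities $\xi_i$, the first-passage functions $F(e,\cdot)$, and the return generating functions $G_i(\cdot)$ evaluated at distinguished points. Since finite sums, products, ratios, and the stationary vector of a finite stochastic matrix (which solves a linear system with analytic coefficients) are real analytic in analytic entries, it suffices to prove that each of these constituent quantities varies real analytically in $p\in\mathcal{P}(B)$.

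The heart of the matter is therefore the analyticity of the generating functions attached to $p$. Here I would repeat the argument of Proposition~\ref{analfree}: each first-passage or return generating function expands as a power series in the coordinates $\{p(g)\}_{g\in B}$ with non-negative coefficients counting weighted paths, and, after introducing the dilation variable $z$ exactly as there, the strict inequality $\varrho(p)<1$ (valid here by non-amenability of $G$) forces the radius of convergence in $z$ to exceed $1$. Evaluating slightly beyond $z=1$ — equivalently, at the dilated arguments $(1+\delta)p(g)$ — keeps the series finite with non-negative terms, so the power series converges on a full complex polydisc neighbourhood of the real point $(p(g))_{g\in B}$, giving real analyticity.

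The main obstacle concerns the quantities defined only implicitly, namely the $\xi_i$ and the relevant values of the $G_i$, which satisfy the functional (traffic) equations of the free product. For these I would invoke the analytic implicit function theorem applied to the defining system $\Phi(\xi,p)=0$, where $\Phi$ is analytic by the previous paragraph; the delicate point is to verify that the Jacobian $\partial_\xi\Phi$ is invertible at the true solution, since without this one cannot propagate analyticity through the system. I expect this non-degeneracy to follow once more from $\varrho(p)<1$, which supplies the strict sub-stochasticity (contraction) ruling out a degenerate Jacobian; once it is in place, analyticity of $\ell_B$ is assembled by composition from the finitely many analytic building blocks.
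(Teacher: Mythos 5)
Your existence argument via Kingman is correct and self-contained (the block length is indeed subadditive under concatenation-with-reduction, and the one-step increments are bounded), and the overall architecture of your analyticity argument --- reduce $\ell_B$ to an explicit expression in the $\alpha_i$, the hitting probabilities $\xi_i$, and the return generating functions $G_i$ evaluated at $\xi_i$, then prove each building block real analytic by the dilation/domination argument of Proposition~\ref{analfree} using $\varrho(p)<1$ --- is essentially the paper's route, except that the paper does not re-derive the regeneration structure: it starts directly from the closed formula $\ell_B=\sum_{i=1}^r\alpha_i\,\frac{1-\xi_i}{\xi_i}\,\bigl(1-(1-\xi_i)\,G_i(\xi_i)\bigr)$ of \cite[Equ.~(9)]{Gi1}, so no finite Markov chain, stationary vector, or ratio of expectations needs to be analyzed.

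However, there is a genuine gap at the point you yourself flag as the heart of the matter. You classify the $\xi_i$ (and the relevant values of the $G_i$) as ``defined only implicitly'' and propose the analytic implicit function theorem, which requires invertibility of the Jacobian $\partial_\xi\Phi$ at the true solution; you do not prove this, you only ``expect'' it to follow from $\varrho(p)<1$. Non-degeneracy of fixed-point systems of generating functions is exactly the kind of hypothesis that can fail at critical parameters (compare the equation $f(s)=s$ with $f'(1)=1$ for a critical Galton--Watson process), so leaving it unverified leaves the central analyticity claim unestablished. Moreover, the detour is unnecessary: $\xi_i$ is a hitting probability, i.e.\ a sum of probabilities of paths, so it is \emph{directly} given by a power series in the coordinates of $p$ with non-negative coefficients --- precisely the situation of Proposition~\ref{analfree}. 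As the paper does, introduce the dilation variable $z$, observe that $\xi_i(z)$ is dominated for real $z>0$ by a Green function whose radius of convergence is $1/\varrho(p)>1$, and conclude convergence at the dilated point $(1+\delta)p$; this gives real analyticity of $p\mapsto\xi_i$ with no implicit equation at all. The composite quantity $G_i(\xi_i)$ is then handled by noting that $p\mapsto G_i(z_0)$ is real analytic for each fixed real $z_0<1$ (same expansion) and that $\xi_i<1$ by \cite[Lemma 2.3]{Gi1}, so $p\mapsto G_i(\xi_i)$ is a composition of real analytic maps. With this replacement your proof closes; as written, it does not.
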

\begin{proof}
In \cite[Equ. (9)]{Gi1} a formula for $\ell_{B}$ is given:
$$
\ell_B= 
\sum_{i=1}^r\alpha_i\,\frac{1-\xi_i}{\xi_i}\,\bigl(1-(1-\xi_i)\,G_i(\xi_i)\bigr).
$$
Let be $d=|B|-1$, and write $p=(q_1,\dots,q_d)\in\mathcal{P}(B)$.
Analogously to the proof of Proposition \ref{analfree} one can write $\xi_i$ as a power series
(evaluated at $z=1$) in the form
$$
\xi_i(z)=\sum_{(n_1,\dots,n_{d})\in\mathbb{N}^{d}} c(n_1,\dots,n_{d}) 
q_1^{n_1}q_2^{n_2}\dots q_d^{n_{d}} z^{n_1+\dots +n_{d}}, \quad z\in\mathbb{C}.
$$
Since $\varrho(p)<1$ the Green functions $G(g|z)=\sum_{n\geq 0}
p^{(n)}(g)z^n$, $g\in G$,
have radii of convergence $R=1/\varrho(p)>1$ and dominate $\xi_i(z)$ for real $z>0$. Hence,
$\xi_i(z)$ has radius of convergence bigger than $1$, which in turn --
following the same argumentation as in Proposition \ref{analfree} -- yields
real analyticity
of $\xi_i=\xi_i(1)$ in a neighbourhood of any $p\in\mathcal{P}(B)$. Furthermore,
$G_i(z)$ can be expanded in the same form as $\xi_i(z)$ and, for each real positive $z_0<1$,
the mapping $p\mapsto G_i(z_0)$ is real analytic. Since $\xi_i<1$
(see e.g. \cite[Lemma 2.3]{Gi1}) the
mapping $p\mapsto G_i(\xi_i)$ is also real analytic as a composition of real analytic functions. This yields the proposed statement.
\end{proof}

\begin{proposition}[\cite{Gi1}]\label{reg2}
Let $p$ govern a nearest neighbour random walk on $G$, that is, the length 
$|g|$ is computed with respect to the generator $B$. Then the drift function $p\mapsto
\ell_p=\lim_{n\to\infty} \frac{1}{n}|X_n|$ is real analytic.
\end{proposition}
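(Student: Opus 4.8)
The plan is to reduce the word-length drift $\ell_p$ to two ingredients that are visibly analytic: the block-length drift $\ell_B$, already shown to be real analytic in Proposition \ref{prop:freeproduct-drift}, and a mean ``length per block'' $\bar L$ which I will show to be analytic by the same power-series domination used in Proposition \ref{analfree}. The geometric input is that, since $B=\bigcup_{i=1}^r\mathrm{supp}(p_i)$ and distinct free factors satisfy no common relations, the word length is additive over blocks: for a reduced word $x=x_1\cdots x_n\in G$ with $x_j\in G_{i(j)}$ one has $|x|=\sum_{j=1}^{n}|x_j|_{i(j)}$, where $|x_j|_{i(j)}$ denotes the length of $x_j$ inside $G_{i(j)}$ with respect to $\mathrm{supp}(p_{i(j)})$. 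Indeed, a geodesic of $x$ in the Cayley graph of $(G,B)$ cannot shorten any single block by inserting generators of another factor, so no cross-block cancellation is possible and the identity is exact.

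Next I would pass to the limit. The existence of $\ell_p=\lim_n\frac1n|X_n|$ is guaranteed by subadditivity of $g\mapsto|g|$ and Kingman's theorem. To evaluate it I use the regenerative block structure of the walk: let $W_1,W_2,\dots$ be the blocks of the limit word $X_\infty\in G_\infty$ and let $\mathbf{e}_k$ be the time at which the $k$-th block is finalized. A renewal argument gives $\mathbf{e}_k/k\to 1/\ell_B$, while the additivity above together with control of the unfinished blocks between regenerations yields $\frac1k\sum_{j=1}^{k}|W_j|_{i(j)}\to\bar L$, the a.s. Ces\`aro limit of the factor-lengths of the limiting blocks. Since consecutive blocks lie in distinct factors, the sequence $(W_j)$ is governed by a positive recurrent ergodic Markov chain on $\bigcup_{i}(G_i\setminus\{e_i\})$ whose block-length distribution has geometric tails, so $\bar L$ exists and equals the stationary mean $\sum_{i=1}^r\pi_i\bar L_i$, with $\pi_i$ the asymptotic frequency of factor-$i$ blocks and $\bar L_i=\sum_{w\in G_i}|w|_i\,\mu_i(w)$ the mean factor-length under the stationary conditional law $\mu_i$ of an exit block. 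Taking the product of the two limits gives $\ell_p=\ell_B\cdot\bar L$.

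It then remains to prove that $\bar L$ is real analytic in $p$. The frequencies $\pi_i$ and the weights $\mu_i(w)$, like $\xi_i$ and the $z_i$ before them, are probabilities obtained by summing nonnegative path weights; written as power series in the coordinates of $p$ and evaluated at $z=1$, and dominated using $\varrho(p)<1$ together with the fact that $G(g|z)=\sum_n p^{(n)}(g)z^n$ has radius of convergence $1/\varrho(p)>1$, each is real analytic near any $p\in\mathcal{P}(B)$. The cylinder formula for $\nu$ recalled above, with $F(e,w)$ and $1-(1-\xi_i)G_i(\xi_i)$ analytic, makes these quantities explicit.

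The main obstacle is the analyticity of the infinite sum $\bar L_i=\sum_{w}|w|_i\,\mu_i(w)$ defining the mean exit length, rather than of the individual weights. I would establish it by the domination trick of Proposition \ref{analfree}: the factor-length $|w|_i$ of an exit block at $w$ is bounded by the number of steps the walk spends producing that block, and $\varrho(p)<1$ forces the corresponding hitting and return weights to decay geometrically, so one may insert a factor $(1+\delta)^{|w|_i}$ for some small $\delta>0$ and still keep the sum finite. This bounds $\bar L_i$ by a power series convergent on a complex neighbourhood of $p$, which yields analyticity of $\bar L_i$, hence of $\bar L$, and finally of $\ell_p=\ell_B\cdot\bar L$.
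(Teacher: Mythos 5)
Your proposal is correct and is essentially the paper's own argument: the decomposition $\ell_p=\ell_B\cdot\bar L$ is precisely the drift formula of \cite[Section 7]{Gi1} that the paper invokes, and your key analytic step --- inserting a factor $(1+\delta)^{|w|_i}$, justified because for a nearest-neighbour walk the word length of a block is at most the number of steps spent producing it, so this factor is absorbed into a geometrically decaying time variable --- is exactly the paper's domination of $\widetilde G_j(y,z)$ using $m\leq n$ and $y<1$. The only real difference is that you sketch a regeneration-theoretic derivation of the drift formula, whereas the paper simply cites \cite{Gi1} for it and works directly with the generating function $\widetilde G_j$.
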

\begin{proof}
By the formula for $\ell$ given in \cite[Section 7]{Gi1} we just have to check that the mapping
$$
p\mapsto \widetilde G_j(y,z):=\sum_{m,n\geq 0} \sum_{x\in G_j:|x|=m} p_j^{(n)}(e_j,x)y^nz^m
$$
is real analytic for all $y\in(0,1)$ and $z=1$. For a moment fix $y<1$ and choose $\delta>0$
small enough such that $y(1+2\delta)^2<1$. Since $p_j^{(n)}(e_j,x)>0$, $x\in G_j$
with $|x|=m$, implies $n\geq m$, we get
$$
\widetilde G_j\bigl(y,(1+2\delta)^2\bigr)= \sum_{m,n\geq 0} \sum_{x\in G_j:|x|=m} p_j^{(n)}(e_j,x)\bigl(y(1+\delta)^2\bigr)^n
\leq \frac{1}{1-y(1+2\delta)^2}<\infty.
$$
This yields $\frac{\partial}{\partial z}\widetilde
G\bigl(y,(1+\delta)^2\bigr)<\infty$. Since each term $p^{(n)}_j(e_j,x)$ can be
written as a polynomial 
$$
\sum_{\substack{(n_1,\dots,n_d)\in\mathbb{N}^d:\\ n_1+\ldots + n_d=n}}c(n_1,\dots,n_d) q_1^{n_1}\cdot \ldots
\cdot q_d^{n_d},
$$
where $p=(q_1,\dots,q_d)\in\mathcal{P}(B)$ and $c(n_1,\dots,n_d)\geq 0$, we rewrite $\frac{\partial}{\partial z}\widetilde
G\bigl(y,(1+\delta)^2\bigr)$ as
$$
\frac{1}{1+\delta}\sum_{m,n\geq 0}\sum_{x\in G_j:|x|=m} m
\bigl(p_j^{(n)}(e_j,x)(1+\delta)^n\bigr) \bigl(\xi_j(1+\delta)\bigr)^n.
$$
That is, $\sum_{m,n\geq 0}\sum_{x\in G_j:|x|=m} m
p_j^{(n)}(e_j,x)\xi_j^n$ is real analytic in $\mathcal{P}(B)$ as a composition
of real analytic functions, and this yields
the claim.
\end{proof}

Let us mention that -- in contrast to Proposition \ref{maximum} --
simple random walk (that is, $\mu$ is the equidistribution on $\mathrm{supp}(\mu)$) is not necessarily the fastest random walk. Namely, it can be
verified with the help of \textsc{Mathematica} that -- with $p_i$ describing the
simple random walk on $G_i$ -- the simple random walk on
$(\mathbb{Z}/3\mathbb{Z})\ast (\mathbb{Z}/2\mathbb{Z})$ (that is,
$\alpha_1=2/3,\alpha_2=1/3$) is slower than the random walk on this
free product with the parameters $\alpha_1=\alpha_2=1/2$.
\par
Furthermore, we have the following regularity result:

\begin{proposition}[\cite{Gi3}]
Assume that $h_i:=-\sum_{g\in G_i} p_i(g) \ln p_i(g)<\infty$ for all
 $i\in\{1,\dots,r\}$, that is, all random walks on the factors $G_i$ have
 finite single-step entropy. Then the mapping $p\mapsto h_p$ is real analytic.
\end{proposition}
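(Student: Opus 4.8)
The plan is to base the argument on the explicit entropy formula of \cite{Gi3}, which is an instance of the variational formula (\ref{entropy}) applied to the Poisson boundary $(G_\infty,\nu)$ exhibited above, namely $h_p = -\sum_{g\in B} p(g)\int_{G_\infty}\ln\frac{dg^{-1}_\ast\nu}{d\nu}(\xi)\,d\nu(\xi)$, and to reduce the analyticity of $p\mapsto h_p$ to that of finitely many building blocks while isolating the two infinite summations that occur: one over the factor groups inside the boundary integral, and one over the (possibly infinite) support of $p$. The first task is to make the Radon--Nikodym derivative explicit.

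For $g\in G_k$, the multiplicative structure of the Green function along words of the free product shows that $\frac{dg^{-1}_\ast\nu}{d\nu}(\xi)=\lim_n G(g^{-1},\xi_n)/G(e,\xi_n)$ depends only on the first block $x_1$ of $\xi=x_1x_2\cdots$: when $x_1\notin G_k$ every path to the deep part of $\xi$ must pass through $e$, so the kernel equals the constant $F(g^{-1},e)$, while when $x_1\in G_k$ every path must pass through $x_1$ and the kernel equals $F(g^{-1},x_1)/F(e,x_1)$ --- exactly the mechanism behind the two values $z_i,z_{-i}^{-1}$ in the free-group computation. Together with $\nu([x_1])=F(e,x_1)\bigl(1-(1-\xi_k)G_k(\xi_k)\bigr)$, each inner integral becomes a series $\sum_{x_1\in G_k}\nu([x_1])\ln K(g,x_1)$ plus finitely many ``far'' terms. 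The building blocks appearing here --- the hitting probabilities $\xi_i$, the evaluations $G_i(\xi_i)$, and the first-passage functions $F(\cdot,\cdot)$ --- are real analytic in $p$ by the arguments of Propositions \ref{prop:freeproduct-drift} and \ref{analfree}: each is the value at $z=1$ of a power series in the step probabilities whose radius of convergence exceeds $1$ because $\varrho(p)<1$, hence each extends holomorphically to a common complex polydisc neighbourhood $U$ of a fixed $p_0$, and, being strictly positive on $\mathcal{P}(B)$, so do their logarithms.

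The substance of the proof lies in controlling the two sums. For fixed $g$ the inner sum over first blocks $x_1\in G_k$ converges because the Harnack inequality bounds $|\ln K(g,x_1)|$ uniformly in $x_1$ and $\nu$ is a probability; the geometric decay of $\nu([x_1])$ in the block length (a consequence of transience, $\varrho<1$) will serve later for uniformity. The outer sum over generators $g\in B$ is where the hypothesis $h_i<\infty$ enters: a Harnack/Green-distance estimate bounds $|\ln K(g,x_1)|$ in terms of the Green distance $-\ln F(e,g^{-1})$, itself at most $-\ln p(g^{-1})+\mathrm{const}$, so that $\sum_{g\in B}p(g)\,\bigl|\int_{G_\infty}\ln K(g,\cdot)\,d\nu\bigr|$ is dominated by a single-step entropy expression controlled by $\sum_{i=1}^r\alpha_i h_i<\infty$. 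Thus finite single-step entropy of the factors is exactly what renders the defining sum for $h_p$ absolutely convergent; when the supports are finite this is automatic, and the content of the hypothesis is to reach the general case.

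Finally, to pass from convergence to analyticity I would complexify: each summand extends holomorphically to $U$, and I would produce a summable majorant for the moduli of the summands that is \emph{uniform} over $p\in U$, by replacing $p$ with $p(1+\delta)$ (with $\delta>0$ small enough that $(1+\delta)\varrho(p_0)<1$) in the domination devices of the previous proofs and re-running both the geometric decay of $\nu$ and the entropy estimate on the complexified parameters. A Weierstrass theorem for locally uniformly convergent series of holomorphic functions then makes $h_p$ holomorphic on $U$, hence real analytic. The hard part is precisely this last upgrade: the hypothesis $h_i<\infty$ only supplies summability in the \emph{real} variable, and one must show that the Harnack and entropy dominations survive on a full complex neighbourhood so as to yield local uniform convergence of the holomorphic summands. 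Once that majorant is secured, assembling the finitely many contributions indexed by the factors $k$ gives that $p\mapsto h_p$ is real analytic.
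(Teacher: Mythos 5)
Your statement is one the survey only \emph{quotes}: the paper contains no proof of this proposition (it is cited from \cite{Gi3}), so your attempt can only be measured against the argument given there and against the survey's own proofs of the neighbouring drift results. The proof in \cite{Gi3} does not go through the boundary formula (\ref{entr})--(\ref{entropy}) at all: it identifies $h_p$ with the rate of escape in the Green metric, $h_p=\lim_n -\frac1n \ln F(e,X_n)$ (cf. \cite{BP}, \cite{BHM1}), computes this limit through the exit-time decomposition of the trajectory into blocks, and obtains explicit formulas whose ingredients are first-passage generating functions; analyticity then follows by the non-negative-coefficient power-series domination used in Propositions \ref{analfree}, \ref{prop:freeproduct-drift} and \ref{reg2}, with the hypothesis $h_i<\infty$ entering through estimates on the logarithmic terms. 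Your alternative route --- the Poisson-boundary entropy formula together with the observation that the Martin kernel of a free product depends only on the first block of $\xi$ (which you state correctly; it follows from the cut-point structure of the Cayley graph) --- is legitimate in principle, and your real-variable convergence claims can indeed be repaired using positivity: Harnack gives $K(g,\cdot)\ge p(g)$, the identity $\int K(g,\cdot)\,d\nu =1$ controls the positive part of $\ln K$, and Kaimanovich--Vershik gives $\sum_g p(g)\int (-\ln K(g,\cdot))\,d\nu \le H_{1,p}<\infty$.

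The genuine gap is the one you flag yourself: the entire analytic content of the theorem is the construction of a common complex domain and a locally uniform summable majorant for the complexified double series, and this is exactly the step you defer (``once that majorant is secured\dots''). Moreover, the devices you propose for it fail concretely. Under your own substitution $p\rightsquigarrow(1+\delta)p$, the series $F(e,x_1)$ is carried by paths of length at least $|x_1|$, so its value is multiplied by at least $(1+\delta)^{|x_1|}$; consequently a complex perturbation of the parameters produces a \emph{relative} error in $F_z(e,x_1)$ that blows up as $x_1\to\infty$. Hence one cannot rule out zeros of $F_z(e,x_1)$ on any fixed complex neighbourhood, so the terms $\ln\bigl(F_z(g^{-1},x_1)/F_z(e,x_1)\bigr)$ do not even have a common domain of holomorphy, let alone a uniform majorant: the $(1+\delta)$-trick bounds absolute values of non-negative series, but neither ratios nor logarithms. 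The positivity-based bounds above collapse as soon as $p(g)$ is complex, so ``re-running the dominations on the complexified parameters'' is not a substitution but a new (and here unsolved) problem. Two further points would also fail as written: the bound $-\ln F(e,g^{-1})\le -\ln p(g^{-1})+C$ is vacuous when $p$ is not symmetric (then $p(g^{-1})$ may vanish), and even for symmetric support the cross-entropy $\sum_g p(g)\bigl(-\ln p(g^{-1})\bigr)$ need not be finite when only $h_i<\infty$ is assumed; and in the only case where the hypothesis $h_i<\infty$ has content --- infinite supports --- the set $\P(B)$ is infinite-dimensional, so the meaning of ``real analytic'' (e.g.\ along finite-dimensional families, as discussed at the end of Section 4) must be fixed before any complexification can be set up. In short, the outline locates the difficulty correctly, but the proof is missing precisely there, and the estimates offered in its place are not correct as stated.
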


We now turn to another class of groups whose Cayley graphs have a tree-like structure.
A group $G$ is called \textit{virtually free} if it has a free subgroup of
finite index. At this point we assume that $G$ has a free subgroup with at
least $d\geq 2$ generators; otherwise, $G$ is a finite extension of
$\mathbb{Z}$ where we either get recurrent random walks or non-regularity
points on $\mathbb{P}(B)$. It is well-known that virtually
free groups can be constructed from a finite number of
finite groups by iterated amalgamation and HNN extensions. Each element of $G$
can be written as $x_1\dots x_nh$, where $x_i\in\{\pm i;i=1,\dots d\}$ and $h$
being one of finitely many representatives for the different cosets. Suppose we
are given a weight or length function $l(\pm i)\in\mathbb{R}$ for $i\in\{1,\dots,d\}$. Then a natural
length function on $G$ is defined by $l(x_1\dots x_nh)=\sum_{j=1}^n l(x_j)$.
We have the following result:

\begin{proposition}[\cite{Gi2}]\label{prop-amalgam}
Let $G$ be a virtually free group. Let $p$ govern a finite range random walk on $G$. Then the
mapping $p\mapsto \lim_{n\to\infty} l(X_n)/n$ is real analytic.
\end{proposition}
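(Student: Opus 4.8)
The plan is to follow the strategy already used in Propositions \ref{analfree}, \ref{prop:freeproduct-drift} and \ref{reg2}: start from the explicit drift formula provided in \cite{Gi2}, reduce it to a finite collection of first-passage and Green generating functions, and prove that each of these building blocks depends real analytically on $p$. The single analytic input driving the whole argument is that $G$ contains a free subgroup on at least $d\geq 2$ generators, hence is non-amenable, so that the spectral radius satisfies $\varrho(p)<1$, exactly as in the free group and free product cases.

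First I would record the drift formula from \cite{Gi2}, which expresses $\lim_{n\to\infty} l(X_n)/n$ as a ratio whose numerator and denominator are finite sums of products of hitting probabilities of the relevant cosets when starting from $e$, of first-passage generating functions $F(e,\cdot)$ between generators and coset representatives, and of Green functions attached to the finite factor data. Because every element is written as $x_1\cdots x_n h$ with $l(x_1\cdots x_n h)=\sum_{j} l(x_j)$ independent of the coset representative $h$, the length is additive along the \emph{free} part of the word, and the finitely many cosets enter only through a finite-state bookkeeping. Establishing analyticity of the drift thus reduces to establishing analyticity of these finitely many functions in $p\in\P(B)$.

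The core analytic step is identical in spirit to the proof of Proposition \ref{prop:freeproduct-drift}. Writing $p=(q_1,\dots,q_d)\in\P(B)$ with $d=|B|-1$, each of the relevant first-passage and hitting generating functions admits a power series expansion with nonnegative coefficients in the variables $q_1,\dots,q_d$ and an auxiliary variable $z$, and each is dominated for real $z>0$ by the corresponding Green function $G(g\mid z)=\sum_{n\geq 0} p^{(n)}(g) z^n$. Since $\varrho(p)<1$, these Green power series have radius of convergence $1/\varrho(p)>1$, so the dominated series converge at $z=1$ with room to spare; evaluating the majorant at $z=1+\delta$ for small $\delta>0$ shows, exactly as before, that each building block is real analytic in a neighbourhood of any $p\in\P(B)$. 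To accommodate the finite range of the walk together with the length function $l$, I would introduce, as in the proof of Proposition \ref{reg2}, a bivariate generating function tracking the number of steps through a variable $y$ and the accumulated $l$-length through a variable $z$, obtaining the drift after differentiating in $z$ at $z=1$ and substituting the (analytic, and $<1$) hitting probabilities for $y$; a ratio or composition of real analytic functions is again real analytic.

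The main obstacle is setting up the correct system of traffic equations adapted to the amalgamated-product and HNN structure underlying the virtually free group, and handling the fact that the walk is finite range rather than nearest neighbour: a single step may cross several letters and move between cosets, so the first-passage generating functions satisfy a coupled system indexed by the finitely many coset representatives rather than the scalar recursions of the free group case. One then has to ensure that the solution of this system inherits real analyticity. The clean way is to keep the solution in the form of the positive-coefficient, majorized power series above, so that analyticity follows automatically from convergence at $z=1$ as in Proposition \ref{analfree}; alternatively one could solve the algebraic system by the implicit function theorem, which would require checking that the relevant Jacobian is nonsingular on $\P(B)$. Once the building blocks are known to be real analytic, the explicit formula from \cite{Gi2} assembles them into the drift by rational operations and composition, which completes the proof.
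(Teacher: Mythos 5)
Your proposal is correct and follows essentially the same route as the paper: invoke the explicit drift formula of \cite[Theorem 2.4]{Gi2} (the paper phrases your ``finite-state bookkeeping'' of cosets as interpreting the walk as a random walk on a regular language), use non-amenability to get $\varrho(p)<1$, and then run the positive-coefficient power-series domination argument of Propositions \ref{analfree}, \ref{prop:freeproduct-drift} and \ref{reg2} to get real analyticity of the building blocks and hence of the drift.
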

\begin{proof}
Random walks on virtually free groups can be interpreted as a random walk on a
regular language in the sense of \cite{Gi2}.
The claim follows from the formula for $\lim_{n\to\infty} l(X_n)/n$, the drift
with respect to the length function $l$, given in \cite[Theorem
2.4]{Gi2}. Due to
non-amenability of $G$ we have again $\varrho(p)<1$. The rest follows
analogously as in the proofs of
Propositions \ref{analfree} and \ref{prop:freeproduct-drift}.
\end{proof}
For the special case $l$ being the natural word length the last proposition is also
covered by Corollary \ref{drift(BA)}.
\par
At this point we want to mention the article \cite{MM2}, which uses similar techniques to
establish statements about the drift of random walks on the braid group $B_3$  and on Artin groups of
dihedral type. Traffic equations are established, whose unique solutions lead to
formulas for the drift. For random walks on these groups there might occur transitions (when varying
the probability measures of constant support), where one has no
regularity. An explicit example for non-differentiability points is given on
the braid group $B_3$; at these points the random walk changes its behaviour
essentially in terms of how the limit words looks like.  
However, \cite{MM2} gives explicit formulas for the drift in terms of the
solutions of the traffic equations splitted up into different branches. By
methods similar to the above, one can show  that the drift is real analytic on 
each branch. Indeed,  solutions of the traffic equations can be written as
converging power series as in the proofs of Propositions \ref{prop:freeproduct-drift}  and \ref{reg2}.

 \section{Hyperbolic groups}
 
 A geodesic metric space is called {\it {hyperbolic}} if geodesic triangles are
 thin: there is $\d \geq 0$ such that each side of a geodesic triangle is
 contained in a $\d$-neighbourhood of the union of the other two sides. A
 finitely generated group is called hyperbolic if the Cayley graph defined by
 some finite symmetric generating set is hyperbolic. This property does not depend
 on the choice of a  generating set. Free  groups are hyperbolic, as are fundamental
 groups of compact manifolds of negative curvature, and small cancellation
 groups. See e.g. \cite{GH} for the main geometric properties of hyperbolic
 groups. The geometric boundary of a hyperbolic space is the space of
 equivalence classes of geodesic rays, where two geodesic rays are equivalent
 if they are at a bounded Hausdorff distance. The geometric boundary $\partial
 G$ of the Cayley graph of  a hyperbolic group $G$ is a compact $G$-space. It
 is endowed with the Gromov metric (see \cite{GH}). The mapping $\Phi : G \to
 \Z^G, \Phi (g) (h)= |h^{-1} g| - |g| $ is an injective mapping  such that $\Phi (G)$ is
 relatively compact for the product topology. The Busemann compactification
 $\overline G$ is the closure of $\Phi (G)$ in $\Z^G$. There is an equivariant
 homeomorphism $\pi : \overline G \setminus G \to \partial G$ (see
 e.g. \cite{WW}). The homeomorphism $\pi$ is finite-to-one (see
 e.g. \cite{CP}). Following \cite{Bj}, we say that $G$ with the generating set  $S$
 satisfies the \textit{basic assumption} (BA) if the homeomorphism $\pi $ is one-to-one. In this case, we write, for $\xi \in \partial G, h \in G$, $\xi (h)$ for the value at $h$ of the sequence $\pi ^{-1} \xi  \in \Z^G.$ Free groups and surface groups with their natural generators satisfy (BA). It is an open problem whether any hyperbolic group admits a symmetric generating set  with the property (BA).
 
 Let $p$ be a probability measure on $G$ with finite support. Then, there is a unique $p$-stationary probability measure $\nu _p $ on $\partial G$ and $(\partial G, \nu _p) $ is a Poisson boundary for $(G, p)$(\cite{An}, \cite{K} Theorem 7.6). If (BA) is satisfied, the measure $\nu _p$  is the unique stationary probability measure on the Busemann compactification and formulas (\ref{entropy}) and (\ref{escape}) write:
 \begin{equation}\label{BA}  h_p \; = \; - \sum _{g \in B} \left(\int _{\partial G} \ln  \frac {dg^{-1}_\ast \nu_p }{d\nu_p } (\xi) d\nu_p(\xi)\right) p(g), \;\; \ell_p \; = \;  \sum _{g \in B} \left(\int _{\partial G} \xi (g^{-1}) d\nu_p(\xi)\right) p(g). 
 \end{equation}
 
 \begin{proposition}\label{anal} Assume that $(G,S)$ is a non-elementary
   hyperbolic group and satisfies (BA). Let  $p \in \P(B)$, $\a$ be  small
   enough, and let $f$ be an $\a$-H\"older continuous function on $\partial G$.
   Then the mapping $p \mapsto \int_{\partial G} f(\xi ) d\nu _p (\xi) $ is real analytic on a neighbourhood of $p$ in $\P(B) .$ \end{proposition}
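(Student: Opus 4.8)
The plan is to realise $\nu_p$ as the leading spectral object of an operator that depends affinely on $p$, and then to invoke analytic perturbation theory. The starting point is the defining relation $\nu_p=\sum_{g\in B}p(g)\,g_\ast\nu_p$, which exhibits $\nu_p$ as the unique fixed point of the averaging operator $T_p\mu:=\sum_{g\in B}p(g)\,g_\ast\mu$ on probability measures on $\partial G$. The crucial observation is that, since the action of $G$ on $\partial G$ does not involve $p$, the map $p\mapsto T_p$ is \emph{affine} in the coordinates $p(g)$; dually, the operator $T_p^\ast$ acting on the space $C^\a(\partial G)$ of $\a$-H\"older functions by $(T_p^\ast f)(\xi)=\sum_{g\in B}p(g)f(g\xi)$ is an affine, hence real-analytic (indeed entire), function of $p\in\P(B)$ with values in the bounded operators $\B(C^\a)$. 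Each operator $f\mapsto f(g\,\cdot)$ is bounded on $C^\a$ because every $g\in G$ acts by a bi-H\"older homeomorphism of $(\partial G,d)$ and $B$ is finite.

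First I would establish a spectral gap for $T_p^\ast$ on $C^\a(\partial G)$, uniformly for $p$ in a neighbourhood of a fixed $p_0$. Writing $T_p^{\ast n}f(\xi)=\E[f(X_n\xi)]$, the core estimate is a contraction-on-average inequality for the boundary action: for $\a$ small enough there exist $C$ and $\theta<1$, uniform in $p$ near $p_0$, with $\E\!\left[d(X_n\xi,X_n\eta)^\a\right]\le C\theta^n d(\xi,\eta)^\a$, leading to a Doeblin--Fortet inequality
\begin{equation}\label{df}
[T_p^{\ast n}f]_\a \;\le\; C\theta^n[f]_\a \;+\; C\,\|f\|_\infty ,
\end{equation}
where $[\cdot]_\a$ denotes the H\"older seminorm. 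Since $\partial G$ is compact, the inclusion $C^\a(\partial G)\hookrightarrow C^0(\partial G)$ is compact, so \eqref{df} together with the Ionescu-Tulcea--Marinescu / Hennion theorem yields that $T_p^\ast$ is quasi-compact on $C^\a$ with essential spectral radius at most $\theta$. As $T_p^\ast\mathbf 1=\mathbf 1$ and the stationary measure on $\partial G$ is unique (\cite{An}, \cite{K}), the eigenvalue $1$ is simple; a standard aperiodicity argument rules out other unimodular eigenvalues, so $T_p^\ast$ has a genuine spectral gap.

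With the spectral gap in hand the conclusion is soft. By Kato's analytic perturbation theory, since $p\mapsto T_p^\ast$ is an analytic family and $1$ is a simple isolated eigenvalue of $T_{p_0}^\ast$, the associated spectral projection $\Pi_p\in\B(C^\a)$ depends real-analytically on $p$ near $p_0$. For a Markov operator with a spectral gap the projection onto the leading eigenspace is exactly integration against the invariant measure, that is $\Pi_p f=\bigl(\int_{\partial G}f\,d\nu_p\bigr)\mathbf 1$. Hence, for any fixed $\a$-H\"older $f$, the $C^\a$-valued map $p\mapsto\Pi_p f$ is real analytic, and composing with the bounded evaluation functional $h\mapsto h(\xi_0)$ at a fixed $\xi_0\in\partial G$ gives $\int_{\partial G}f\,d\nu_p=(\Pi_p f)(\xi_0)$ as a real-analytic function of $p$, which is the claim.

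The main obstacle is the uniform Doeblin--Fortet estimate \eqref{df}: the group action on $\partial G$ is only a contraction \emph{on average}, since near the repelling endpoint of a given $g$ the boundary metric is expanded. Controlling this requires the hyperbolic geometry---the behaviour of the Gromov product $(g\xi\mid g\eta)_e=(\xi\mid\eta)_{g^{-1}e}$ under the action---together with the positivity and linear growth of $|X_n|$ (the drift $\ell_p>0$), and it is precisely here that $\a$ must be taken small, so that the $d^\a$-contribution of the expanding region stays negligible. Uniformity of $C$ and $\theta$ as $p$ varies near $p_0$ must also be tracked through these estimates, but it follows from the finiteness of $B$ and the continuity of the relevant geometric quantities.
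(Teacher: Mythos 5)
Your proposal follows essentially the same route as the paper: realize $\nu_p$ through the affine (hence real-analytic) family of averaging operators acting on $\alpha$-H\"older functions on $\partial G$, identify integration against $\nu_p$ with the simple isolated eigenvalue $1$ of that family, and conclude by analytic (Kato) perturbation theory. The only difference is that the spectral input, which you sketch via a contraction-on-average estimate, a Doeblin--Fortet inequality and quasi-compactness, is in the paper simply quoted from Lemma 4 of \cite{Bj} (where non-elementarity and (BA) enter), so your sketch amounts to reproving that cited lemma.
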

 \begin{proof} Let  $\K_\a $ be the space of $\a$-H\"older continuous functions
   on $\partial G$. The space $\K_\a$ is a Banach space with norm $ \|f\|_\a$, where 
 $$ \|f\|_\a \; = \; \max _{\xi\in \partial G} |f(\xi ) | +
 \sup_{\xi,\eta\in\partial G: \xi \not = \eta} \frac{|f(\xi ) - f(\eta )|} {(d(\xi, \eta))^\a} .$$
 For $p \in \P(B)$, let  $\QQ _p$ be the operator on $\K_\a$ defined by 
 $$ \QQ _pf (\xi ) \; = \; \sum _{g \in B} f(g^{-1} \xi ) p(g) .$$
 Clearly, the mapping $p \mapsto \QQ _p $ is real analytic from $\P(B)$ into $\LL (\K_\a ).$ If $G$ is not elementary and satisfies (BA), it can be shown (see \cite{Bj}, Lemma 4) that, for $\a $ small enough,   $f \mapsto \int f d\nu _p $ is an isolated eigenvector for the transposed operator $\QQ _p^\ast $ on the dual space $\K_\a^\ast.$ The proposition follows by a perturbation lemma. \end{proof}
 
 \begin{corollary}\label{drift(BA)}  Assume that $(G,S)$ is a non-elementary hyperbolic group and satisfies (BA). Then the mapping $p \mapsto \ell _p $ is real analytic. \end{corollary}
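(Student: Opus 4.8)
The plan is to deduce the corollary from Proposition \ref{anal} via the explicit formula for the drift under (BA). Recall from (\ref{BA}) that
$$\ell_p \; = \; \sum_{g \in B} \left(\int_{\partial G} \xi(g^{-1})\, d\nu_p(\xi)\right) p(g).$$
Since $B$ is finite and $p \mapsto p(g)$ is the restriction of a linear, hence real analytic, coordinate function on $\P(B)$, it suffices to show that for each fixed $g \in B$ the mapping $p \mapsto \int_{\partial G} \xi(g^{-1})\, d\nu_p(\xi)$ is real analytic; a finite sum of products of real analytic functions is then real analytic. By Proposition \ref{anal}, this reduces to checking that, for each fixed $h = g^{-1} \in G$, the evaluation function $f_h(\xi) := \xi(h)$ belongs to $\K_\a$ for the small exponent $\a$ furnished by that proposition.

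First I would record that $f_h$ is well defined and bounded. Under (BA) the homeomorphism $\pi$ identifies $\partial G$ with $\overline G \setminus G$, so $\xi(h)$ is the value at $h$ of the horofunction $\pi^{-1}\xi$; by the triangle inequality $|\xi(h)| \leq |h|$, and $f_h$ takes only integer values. Thus $\|f_h\|_\infty \leq |h|$ and $f_h$ attains at most finitely many values.

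The core of the argument is the H\"older estimate. Because $f_h$ is integer-valued, $|f_h(\xi) - f_h(\eta)|$ is either $0$ or at least $1$, so it suffices to produce $r_h > 0$ with $f_h(\xi) = f_h(\eta)$ whenever $d(\xi, \eta) < r_h$: for such pairs the difference vanishes, while for $d(\xi, \eta) \geq r_h$ the bound $|f_h(\xi) - f_h(\eta)| \leq 2|h| \leq \bigl(2|h|\, r_h^{-\a}\bigr)\, d(\xi,\eta)^\a$ holds with a constant depending only on $h$ and $\a$. To produce $r_h$ I would use hyperbolicity: if two boundary points are close in the Gromov metric, their geodesic rays from $e$ fellow-travel up to a distance comparable to $-\log d(\xi,\eta)$, and once this distance exceeds $|h|$ plus a bound depending only on the hyperbolicity constant $\d$, the two horofunctions necessarily agree at $h$. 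This yields $f_h(\xi) = f_h(\eta)$ for $d(\xi,\eta)$ small enough, hence $f_h \in \K_\a$ for every $\a > 0$, in particular for the small $\a$ required above.

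The main obstacle is precisely this last geometric step: translating closeness in the Gromov metric into equality of horofunction values at the fixed point $h$. This is where both the hyperbolicity of $G$ (thin triangles, controlling how long geodesics to nearby boundary points stay together) and the assumption (BA) (which makes $\xi \mapsto \xi(h)$ a genuine single-valued continuous function on $\partial G$) are essential. Once $f_h \in \K_\a$ is established, the conclusion is immediate from Proposition \ref{anal}.
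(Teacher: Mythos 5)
Your proposal is correct and takes essentially the same route as the paper: the paper's entire proof is the one-line observation that the function $\xi \mapsto \xi(g^{-1})$ in formula (\ref{BA}) belongs to $\K_\a$ for all $\a$, so that Proposition \ref{anal} applies termwise to the finite sum defining $\ell_p$. Your additional justification of the H\"older bound is sound (and can even be softened: under (BA) the map $\xi \mapsto \pi^{-1}\xi(h)$ is continuous and integer-valued on the compact space $\partial G$, hence locally constant with a uniform radius, which gives the same estimate without invoking thin triangles); the paper simply asserts this membership without proof.
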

 Indeed, the function $\xi (g^{-1}) $ in formula (\ref{BA}) belongs to $\K_\a$ for all $\a$.
 Corollary \ref{drift(BA)} is due to \cite{L2} in the case of the free group. P. Mathieu (\cite{Ma2}) proved the  $C^1$ regularity and gave a formula for $\nabla _p \nu _p$ and $\nabla _p \ell _p$ in the symmetric case, to be compared with formulas for  linear response of dynamical systems (cf. \cite{R}).
 
 \
 
 The formula (\ref{BA}) for the entropy is valid in general, even without the
 (BA) hypothesis, but observe that the integrand $\vf _p (g, \xi) :=
 \displaystyle - \ln  \frac {dg^{-1}_\ast \nu_p }{d\nu_p } (\xi) $ is itself a
 function of $p$. To study this function, we use the description by A. Ancona
 (\cite{An}) of the Martin boundary of a random walk with finite support on a
 hyperbolic group. Recall that $F_p(g,h) $ is the probability of reaching $h$
 starting from $g$ in dependence of $p$.
 \begin{proposition}[\cite{An}]\label{martin} Assume that $G$ is hyperbolic and
   that $p$ has finite support. Then
 $$ 
\vf _p(g, \xi )  \; = \; \lim _{h \to \xi} \ln \frac
 {F_p(e,h)}{F_p(g^{-1},h)} \quad \textrm{ for all } g\in G,\xi\in\partial G.
$$
 \end{proposition}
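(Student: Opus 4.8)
The plan is to identify the cocycle $\vf_p(g,\xi)$ with the logarithm of the Martin kernel and then to trade Green functions for hitting probabilities. Write $G_p(x,h) = \sum_{n\geq 0} p^{(n)}(x^{-1}h)$ for the Green function and set $K_p(x,\xi) := \lim_{h\to\xi} G_p(x,h)/G_p(e,h)$ for the Martin kernel. Recall that $\nu_p$ is the hitting measure, namely the distribution of $X_\infty$ for the walk started at $e$. The walk started at $g$ is $gX_n$, whose limit is $g\cdot X_\infty$ for the boundary action, so its hitting measure is exactly $g_\ast\nu_p$. By the Martin representation of harmonic measures one then has
$$\frac{dg_\ast\nu_p}{d\nu_p}(\xi) \; = \; K_p(g,\xi).$$
Replacing $g$ by $g^{-1}$, taking $-\ln$, and using that $K_p(g^{-1},\cdot)$ is positive and finite gives $\vf_p(g,\xi) = \lim_{h\to\xi}\ln\bigl(G_p(e,h)/G_p(g^{-1},h)\bigr)$. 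Finally the factorization $G_p(x,h) = F_p(x,h)\,G_p(e,e)$ recorded in Section 1 cancels the common factor $G_p(e,e)$ and produces the asserted formula in terms of $F_p$.

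Carried out in order, the steps are: (i) recall, as stated before Proposition \ref{anal}, that by Ancona (\cite{An}) and Kaimanovich (\cite{K}) the pair $(\partial G,\nu_p)$ is the Poisson boundary and the geometric boundary $\partial G$ is $G$-equivariantly homeomorphic to the minimal Martin boundary, so that $G_p(x,h)/G_p(e,h)$ converges as $h\to\xi\in\partial G$ in the Gromov topology and $K_p(\cdot,\xi)$ is the minimal harmonic function with pole $\xi$; (ii) check $g_\ast\nu_p = \nu_p^g$ directly from the definition of $\nu_p$ as a hitting measure; (iii) apply the Martin representation $d\nu_p^g/d\nu_p = K_p(g,\cdot)$; (iv) pass to logarithms and substitute $G_p = F_p\,G_p(e,e)$.

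The genuinely hard input -- and the reason the statement is credited to Ancona -- is step (i): one must show that along every sequence $h\to\xi$ the ratios $G_p(g^{-1},h)/G_p(e,h)$ converge, and that the limiting kernels sweep out precisely the geometric boundary. In a tree, that is the free-group situation of Section 2, this is immediate because $F_p$ is exactly multiplicative along geodesics and the ratio stabilizes once $h$ passes the relevant vertex; on a general hyperbolic group multiplicativity holds only up to a bounded factor. The decisive estimates are the Ancona inequalities $G_p(x,z) \asymp G_p(x,y)\,G_p(y,z)$ for $y$ at bounded distance from a geodesic joining $x$ to $z$, which express that a finitely supported walk must traverse bounded neighbourhoods of geodesics; from them one extracts the H\"older-type control that identifies the Martin boundary with $\partial G$ and forces the Martin kernel to converge. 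I would quote these facts from \cite{An}, after which the identity above is a formal manipulation.
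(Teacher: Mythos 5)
Your proposal is correct and takes essentially the same route as the paper: the paper gives no independent proof of Proposition \ref{martin}, quoting it as Ancona's description of the Martin boundary, and your argument is precisely the standard unfolding of that citation --- Ancona's inequalities identify the Martin compactification with $\partial G$ and give convergence of the kernels, equivariance of hitting measures together with the Martin representation give $\frac{dg_\ast\nu_p}{d\nu_p}(\xi)=K_p(g,\xi)$, and the factorization $G_p(x,h)=F_p(x,h)\,G_p(e,e)$ turns the Green-function ratio into the stated ratio of first-passage probabilities. The only point worth making explicit is that the Radon--Nikodym derivative is a priori defined only $\nu_p$-almost everywhere, so the conclusion ``for all $\xi\in\partial G$'' is to be read as saying that the everywhere-defined Martin-kernel limit is the canonical version of that density, which is exactly the reading the paper intends.
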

 A consequence of the proof of Proposition \ref{martin} is that, for all $g\in G$, for $\a $ small enough $\vf _p (g, \xi )  \in \K_\a $ (see \cite{INO}). In the case of free groups, Proposition \ref{martin} goes back to Derriennic (\cite{De1}) and using his arguments one can prove:
 \begin{proposition}[\cite{L2}] If $G$ is a free group and $p$ has finite support $B$, there is $\a$ small enough that, for all $g \in B$,  the mapping $p \mapsto \vf _p $ is real analytic from   a neighbourhood of $p$ in $\P(B)$ into $\K_\a$. \end{proposition}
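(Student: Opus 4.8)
The plan is to combine the Ancona--Derriennic description of $\vf_p$ from Proposition \ref{martin} with the power-series argument already used for Proposition \ref{analfree}, promoting scalar analyticity to analyticity with values in the Banach space $\K_\a$. By Proposition \ref{martin}, for $g\in B$ and $\xi\in\partial G$ we have $\vf_p(g,\xi)=\lim_{h\to\xi}\ln\frac{F_p(e,h)}{F_p(g^{-1},h)}$. Writing an infinite reduced word as $\xi=s_1s_2\cdots$ with prefixes $\xi_n=s_1\cdots s_n$, I would introduce the finite approximants $\vf_p^{(n)}(g,\xi):=\ln\frac{F_p(e,\xi_n)}{F_p(g^{-1},\xi_n)}$ and study the convergence $\vf_p^{(n)}\to\vf_p$ in $\K_\a$.

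First I would establish scalar analyticity of the approximants. As in the proof of Proposition \ref{analfree}, each first-passage probability $F_p(x,y)$ is a power series with nonnegative coefficients in the coordinates of $p$, dominated by the Green function $G(\cdot,\cdot\,|z)$; since the free group is non-amenable, the spectral radius satisfies $\varrho(p)<1$, so these series still converge after replacing $p$ by $(1+\d)p$. Hence each $F_p(x,y)$ extends to a holomorphic function on a fixed complex neighbourhood $\mathcal U$ of $\P(B)$ in $\C^{|B|-1}$, and, since $F_p>0$ on the real domain, after shrinking $\mathcal U$ the denominators $F_p(g^{-1},\xi_n)$ stay nonzero. Thus each $\vf_p^{(n)}(g,\cdot)$ is, for every fixed $\xi$, holomorphic in $p\in\mathcal U$.

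The heart of the argument, and the \emph{main obstacle}, is a uniform geometric-decay estimate, which is the free-group incarnation of Ancona's inequality and is exactly the input Derriennic provides in \cite{De1}. Because the Cayley graph of the free group is a tree, I would exploit the near-multiplicativity $F_p(x,z)\asymp F_p(x,y)F_p(y,z)$ when $y$ lies on the geodesic from $x$ to $z$, with a multiplicative error decaying geometrically once $y$ is far from the endpoints (the finite range of $p$ only allows bounded jumps across $y$). From this I expect two bounds, uniform in $n$ and in $p\in\mathcal U$: a Hölder bound $|\vf_p^{(n)}(g,\xi)-\vf_p^{(n)}(g,\eta)|\le C\,d(\xi,\eta)^\a$, coming from the fact that $\xi$ and $\eta$ with small $d(\xi,\eta)$ share a long common prefix along which the approximants agree; and a Cauchy bound $\|\vf_p^{(n+1)}(g,\cdot)-\vf_p^{(n)}(g,\cdot)\|_\a\le C\rho^{\,n}$ with $\rho<1$, reflecting that, as $\xi_n$ recedes, the relative position of $e$ and the nearby point $g^{-1}$ seen from $\xi_n$ stabilizes. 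Both hold once $\a$ is small enough. The delicate point is that these estimates must survive analytic continuation to complex $p$; I would obtain them by dominating the relevant ratios and differences by the corresponding convergent real series evaluated at the majorant $(1+\d)|p|$, so that the geometric gain persists throughout $\mathcal U$.

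Granting the uniform bounds, the conclusion is soft. The sequence $\vf_p^{(n)}(g,\cdot)$ is then Cauchy in $\K_\a$ uniformly for $p\in\mathcal U$, hence converges there in $\K_\a$-norm to a limit which, by Proposition \ref{martin}, coincides with $\vf_p(g,\cdot)$ on the real domain. A uniform limit of $\K_\a$-valued holomorphic maps is holomorphic (the vector-valued Weierstrass theorem, via Morera and the Cauchy integral formula), and restricting to real $p$ shows that $p\mapsto\vf_p(g,\cdot)$ is real analytic from a neighbourhood of $p$ in $\P(B)$ into $\K_\a$, for every $g\in B$, as claimed.
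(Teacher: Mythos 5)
You should know at the outset that the paper itself contains no proof of this proposition: it is quoted from \cite{L2} with the remark that it follows ``using [Derriennic's] arguments'' from \cite{De1}. So I am judging your proposal against the strategy of \cite{L2}. Your skeleton is indeed that strategy: finite approximants $\vf_p^{(n)}(g,\xi)=\ln\bigl(F_p(e,\xi_n)/F_p(g^{-1},\xi_n)\bigr)$ along the ray, scalar holomorphy of each approximant by the majorization argument of Proposition \ref{analfree}, uniform geometric Cauchy bounds in $\K_\a$, and the vector-valued Weierstrass theorem. The fixed-$n$ steps and the final soft step are fine. The gap is exactly at what you call the heart of the argument: your claim that the uniform-in-$n$ estimates ``survive analytic continuation to complex $p$'' because one can dominate ``the relevant ratios and differences'' by real series evaluated at the majorant $(1+\d)|p|$. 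Majorization compares $F_p$ with $F_q$ for a \emph{different, real} parameter $q$, and such comparisons are structurally incapable of being uniform in $n$: the first-passage probabilities $F_p(\cdot,\xi_n)$ decay exponentially in $n$ at a \emph{rate that depends on the parameter} (for symmetric $p$ this rate is the Green metric speed), so any bound of the form $|F_p(x,\xi_n)|\le F_q(x,\xi_n)$, or $|F_p-F_{p_0}|\le F_q-F_{p_0}$, degrades by a factor which is exponential in $n$. In particular majorization gives neither the uniform lower bounds on $|F_p(g^{-1},\xi_n)|$ for complex $p$ --- which you need even to have all the $\vf_p^{(n)}$ defined and holomorphic on a \emph{common} neighbourhood $\mathcal{U}$ (your ``shrink $\mathcal{U}$ so the denominators are nonzero'' works for each fixed $n$, but a priori these neighbourhoods shrink to a point as $n\to\infty$) --- nor the geometric decay of $\vf_p^{(n+1)}-\vf_p^{(n)}$, whose proof for real $p$ rests on positivity (Harnack at infinity, i.e.\ Birkhoff contraction of first-entrance kernels), a property that has no meaning for complex weights and is not restored by taking absolute values of coefficients. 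The persistence of Ancona-type inequalities under complex perturbation is precisely the hard point (it is the central difficulty in Gou\"ezel's later extension to general hyperbolic groups) and cannot be obtained by majorization alone.

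What actually closes the argument in the free-group case, and keeps the cancellation between numerator and denominator that your domination step destroys, is the tree structure together with the finite range of $p$: a path from $e$ or $g^{-1}$ to $\xi_n$ must, at each vertex of the geodesic ray, first enter the corresponding cone through one of a \emph{bounded} set of entrance points, so $F_p(\cdot,\xi_n)$ is an entry of a product of matrices $M_{s_1}(p)\cdots M_{s_n}(p)$ drawn from finitely many types indexed by the letters. Each entry of each $M_s(p)$ is a power series in $p$ with nonnegative coefficients, convergent on a fixed complex neighbourhood (this is where your $\varrho(p)<1$ majorization is legitimately used), and strictly positive for real $p$. Majorization then yields the \emph{relative} entrywise bound $|M_s(p)-M_s(p_0)|\le\e\, M_s(p_0)$ for complex $p$ close to $p_0$, and a perturbation argument for products of uniformly contracting positive matrices (complex cone contraction) gives, in one stroke, the common domain of definition, the uniform H\"older bounds, and the uniform geometric convergence of the normalized products --- i.e.\ exactly the two estimates you postulated. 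After that, your Weierstrass step applies verbatim. So the architecture is right, but the one step you flagged as delicate is the one your proposed tool cannot perform, and it needs the contraction/perturbation mechanism rather than domination.
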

 \begin{corollary} [\cite{L2}] If $G$ is a free group and  $p$ has finite support $B$,  the mapping $p \mapsto h_p $ is real analytic on $\P(B)$. \end{corollary}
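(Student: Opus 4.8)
The plan is to exploit the general entropy formula (\ref{BA}), which is valid for the free group since it is non-elementary hyperbolic and satisfies (BA). Writing $\vf_p(g,\xi) = -\ln\frac{dg^{-1}_\ast\nu_p}{d\nu_p}(\xi)$ as in the text, the formula reads
$$ h_p = \sum_{g\in B} p(g) \int_{\partial G} \vf_p(g,\xi)\, d\nu_p(\xi) = \sum_{g\in B} p(g)\, \langle \vf_p(g,\cdot),\, \nu_p\rangle, $$
where $\langle\cdot,\cdot\rangle$ denotes the canonical pairing between $\K_\a$ and its dual $\K_\a^\ast$, and $\nu_p$ is regarded as the element $f\mapsto \int_{\partial G} f\,d\nu_p$ of $\K_\a^\ast$. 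The point is that both the integrand and the measure depend on $p$, so one cannot apply Proposition \ref{anal} directly; instead I would treat each summand as a pairing of two analytic maps taking values in $\K_\a$ and in $\K_\a^\ast$ respectively.

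The two ingredients are as follows. First, by the preceding Proposition (\cite{L2}), for $\a$ small enough and for each $g\in B$ the map $p\mapsto \vf_p(g,\cdot)$ is real analytic from a neighbourhood of $p$ into $\K_\a$. Second, I would extract from the proof of Proposition \ref{anal} that $p\mapsto\nu_p$ is itself real analytic as a map into $\K_\a^\ast$: the stationarity of $\nu_p$ means exactly that $\nu_p$ is the eigenvector of the transposed operator $\QQ_p^\ast$ for the eigenvalue $1$, and since $p\mapsto\QQ_p$ is analytic into $\LL(\K_\a)$ (hence $p\mapsto \QQ_p^\ast$ is analytic into $\LL(\K_\a^\ast)$) and this eigenvalue is isolated and simple for $\a$ small, the perturbation lemma produces an analytic family of eigenprojections, and thus an analytic family of eigenvectors $\nu_p\in\K_\a^\ast$.

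Having these, I would combine them through the continuous bilinear evaluation map $\K_\a\times\K_\a^\ast\to\RR$, $(f,\mu)\mapsto\mu(f)$: the composition of the analytic map $p\mapsto(\vf_p(g,\cdot),\nu_p)$ with this continuous bilinear (hence analytic) map is real analytic, so each $p\mapsto\langle\vf_p(g,\cdot),\nu_p\rangle$ is real analytic near $p$. Multiplying by the linear, hence analytic, coordinate $p\mapsto p(g)$ and summing over the finite set $B$ yields real analyticity of $h_p$ in a neighbourhood of every point; since analyticity is a local property, this gives the statement on all of $\P(B)$.

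The main obstacle I anticipate is the bookkeeping of the H\"older exponent: the analyticity of $p\mapsto\vf_p(g,\cdot)$ in the Proposition just above and the isolated-eigenvector property underlying Proposition \ref{anal} both hold only for $\a$ small enough, and a priori the admissible thresholds may differ, so one must fix a single $\a$ below all of them (finitely many conditions, one per $g\in B$, together with the spectral-gap condition). The only other delicate point is upgrading the scalar analyticity $p\mapsto\int f\,d\nu_p$ asserted in Proposition \ref{anal} to genuine $\K_\a^\ast$-valued analyticity of $p\mapsto\nu_p$; this is exactly what the perturbation lemma delivers, so no separate argument is needed beyond invoking it at the level of the eigenvector rather than of a fixed test function.
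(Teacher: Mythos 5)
Your proposal is correct and follows essentially the route the paper intends (and that \cite{L2} carries out): combine the entropy formula (\ref{BA}) with the preceding Proposition's analyticity of $p \mapsto \vf_p(g,\cdot)$ into $\K_\a$ and the analyticity of $p \mapsto \nu_p$ into $\K_\a^\ast$ extracted from the perturbation argument behind Proposition \ref{anal}, then pair them. Your attention to fixing a single sufficiently small H\"older exponent $\a$ for all finitely many $g \in B$ and for the spectral-gap condition is exactly the right bookkeeping.
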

 
 \
 
For cocompact Fuchsian groups there is the following recent result, to be compared with  Corollary \ref{drift(BA)}:
\begin{proposition}[\cite{HMM}] 
Let $G$ be a cocompact Fuchsian group with planar presentation. Then the
mapping $p\mapsto \ell_p$ is real analytic.
\end{proposition}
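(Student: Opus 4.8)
The plan is to reduce the statement, as in Corollary~\ref{drift(BA)}, to an analytic perturbation problem for a transfer operator, but now without the basic assumption (BA), which a general cocompact Fuchsian group need not satisfy with the generators coming from its planar presentation. Since $G$ acts properly discontinuously and cocompactly on the hyperbolic plane, $G$ is a non-elementary hyperbolic group, its geometric boundary $\partial G$ is a circle, and by Ancona's work (\cite{An}, Proposition~\ref{martin}) the Martin boundary is $\partial G$ with a unique $p$-stationary measure $\nu_p$. The drift is still given by formula (\ref{escape}), $\ell_p = \sum_{g\in B}\big(\int \xi(g^{-1})\, d\nu_p(\xi)\big) p(g)$, but the Busemann cocycle $\xi(g^{-1})$ is now only defined on $\ov G\setminus G$, which maps finite-to-one onto $\partial G$ via $\pi$, so formula (\ref{BA}) is not directly available. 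First I would record, using Ancona's inequalities and the proof of Proposition~\ref{martin}, that for $\a$ small enough the horofunction cocycle $(g,\xi)\mapsto \xi(g^{-1})=\lim_{h\to\xi}\bigl(|g^{-1}h|-|h|\bigr)$ is $\a$-H\"older on $\ov G\setminus G$ and that $p\mapsto \vf_p(g,\cdot)$ is analytic into the corresponding H\"older space, exactly as in the free-group case.

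The key new ingredient, for which the \emph{planar presentation} is essential, is a combinatorial coding of the boundary that resolves the finite-to-one ambiguity of $\pi$. I would use the Bowen--Series coding attached to the planar (surface-group) presentation: the action of $G$ on $\partial G\cong S^1$ is orbit-equivalent to a piecewise Markov map whose natural extension is conjugate to a two-sided subshift of finite type $(\Sigma,\sigma)$, together with a H\"older finite-to-one factor map $\pi_\Sigma:\Sigma\to\partial G$ under which both the Busemann cocycle and the Green-metric data $\ln F_p$ pull back to H\"older functions on $\Sigma$. On $\Sigma$ I would introduce, for each $p$, a Ruelle--Perron--Frobenius transfer operator $\mathcal{T}_p$ acting on the Banach space $\mathcal{H}$ of H\"older functions on $\Sigma$, with potential built from $\ln F_p$, so that $\nu_p$ is realized as the $\pi_\Sigma$-image of the eigenmeasure of $\mathcal{T}_p^\ast$ and $\ell_p$ is the integral of the (H\"older) horofunction cocycle against this eigenmeasure. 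The finite-to-one nature of both $\pi$ and $\pi_\Sigma$ is harmless, since the relevant integrands descend to genuine functions lifted from $\partial G$.

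It then remains to run the analytic perturbation argument of Proposition~\ref{anal}. By the preceding step, $p\mapsto \mathcal{T}_p$ is real analytic from $\P(B)$ into the Banach algebra $\LL(\mathcal{H})$; by Ruelle's theorem for subshifts of finite type with H\"older potentials, $\mathcal{T}_p$ is quasi-compact with a simple, isolated leading eigenvalue and a spectral gap. Analytic perturbation theory (Kato) then yields that the leading eigenvalue, eigenvector and eigenfunctional — hence the eigenmeasure realizing $\nu_p$ — vary real analytically with $p$, precisely the perturbation lemma behind Proposition~\ref{anal}. Since $\ell_p$ is obtained by integrating the analytically-varying, jointly H\"older integrand $\vf_p$ against this analytically-varying measure, $p\mapsto \ell_p$ is real analytic.

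The hard part is the construction and verification of the coding. One must show that the Bowen--Series coding furnished by the planar presentation is simultaneously a subshift of finite type with a H\"older factor onto $\partial G$, is compatible with the stationary measure $\nu_p$ and with the Busemann cocycle, and is such that the pulled-back potential $\ln F_p$ depends analytically on $p$ with uniform control of its H\"older norm, the last point resting on Ancona's inequalities. Establishing that the eigendata of $\mathcal{T}_p$ genuinely reproduce $\nu_p$ and the drift formula (\ref{escape}) — rather than some other $\sigma$-invariant object — is where the planarity and the cocompactness of the Fuchsian group do the essential work; once the coding is in place, the analyticity is a routine consequence of quasi-compactness and analytic perturbation theory.
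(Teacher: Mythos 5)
The paper itself contains no proof of this proposition: it is quoted from \cite{HMM}, and the route taken there (as the title of that reference indicates) is \emph{renewal theory} --- the planar presentation is used to produce regeneration times at which the trajectory decomposes into i.i.d.\ pieces with exponential tails, so that $\ell_p$ becomes a ratio of expectations given by convergent power series in $p$. Your route --- Bowen--Series coding, a Ruelle transfer operator whose eigenmeasure realizes $\nu_p$, then Kato perturbation as in Proposition~\ref{anal} --- is a genuinely different, thermodynamic-formalism strategy (essentially the one carried out in later work of S.~Gou\"ezel and S.~Lalley). It is a legitimate outline, but as written it has a genuine gap at its central analytic step, not merely at the coding step you flag as ``the hard part.''

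The gap is your claim that $p \mapsto \vf_p(g,\cdot)$ (equivalently, the Martin-kernel data $\ln F_p$ pulled back to the shift) is real analytic into the H\"older space ``exactly as in the free-group case.'' Everything rests on this: the potential of your operator $\mathcal{T}_p$ is built from $\ln F_p$, so real analyticity of $p \mapsto \mathcal{T}_p \in \LL(\mathcal{H})$ \emph{is} real analyticity of the potential in H\"older norm, and without it the Kato argument never starts. But the free-group proof (\cite{L2}, going back to Derriennic \cite{De1}) uses the tree structure in an essential way: the Martin kernel at $\xi = i_1 i_2 \cdots$ is an explicit finite product of the first-passage quantities $z_i = F(e,i)$, which are analytic in $p$ by the power-series argument of Proposition~\ref{analfree}. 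No such closed formula exists for a cocompact Fuchsian group, and the survey itself records the state of the art for hyperbolic groups: only \emph{Lipschitz} dependence of $\vf_p$ in $\K_\a$ is known (\cite{L3}). Upgrading Lipschitz to analytic requires Ancona-type inequalities that hold \emph{uniformly} in $p$ over compact subsets of $\P(B)$, together with uniform control of H\"older norms of Green-function ratios; this is a substantial theorem in its own right (the technical core of the Gou\"ezel--Lalley approach), not a routine transfer of the free-group computation. A secondary, smaller gap is the identification of the eigenmeasure of $\mathcal{T}_p^\ast$ with the lift of the harmonic measure $\nu_p$ rather than some other invariant object; you acknowledge this, and it is doable (conformality of $\nu_p$ with respect to the Martin cocycle, plus the H\"older regularity from \cite{INO}), but it too needs an argument. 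Note finally that for the drift, as opposed to the entropy, the integrand $\xi(g^{-1})$ in formula (\ref{escape}) does not depend on $p$, so what you actually need is analyticity of $p \mapsto \int f \, d\nu_p$ for fixed H\"older $f$; this simplification does not lower the difficulty, because analyticity of $\nu_p$ is exactly what the unproved potential-analyticity was supposed to deliver.
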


 For a general hyperbolic group, we have a weaker result:
  \begin{proposition}[\cite{L3}] If $G$ is a hyperbolic group and  $p$ has
    finite support $B$, there is $\a$ small enough such that, for all $g \in B$,
    the mapping $p \mapsto \vf _p $ is Lipschitz continuous from   a
    neighbourhood of $p$ in $\P(B)$ into $\K_\a$. \end{proposition}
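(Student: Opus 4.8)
The plan is to work directly with the Ancona representation of $\vf_p$ from Proposition \ref{martin}, namely $\vf_p(g,\xi)=\lim_{h\to\xi}\ln\frac{G_p(e,h)}{G_p(g^{-1},h)}$, where $G_p$ denotes the Green function of $p$, and to estimate differences $\vf_p-\vf_{p'}$ by a first-variation (resolvent) formula controlled by the uniform Ancona inequalities. Fix $p\in\P(B)$ and a small neighbourhood $\mathcal U$ of $p$ in $\P(B)$. First I would record the uniform geometric inputs, valid for all $p'\in\mathcal U$ with constants independent of $p'$: a uniform spectral gap $\sup_{p'\in\mathcal U}\varrho(p')<1$; the uniform Ancona quasi-multiplicativity $C^{-1}F_{p'}(x,y)F_{p'}(y,z)\le F_{p'}(x,z)\le F_{p'}(x,y)F_{p'}(y,z)$ for $y$ within a bounded distance of a geodesic $[x,z]$; and the resulting exponential localization $\frac{G_{p'}(x,u)G_{p'}(u,y)}{G_{p'}(x,y)}\le Ce^{-c\,d(u,[x,y])}$. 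These are the statements underlying the remark (after Proposition \ref{martin}, see \cite{INO}) that $\vf_{p'}(g,\cdot)\in\K_\a$ for $\a$ small, and they hold uniformly on $\mathcal U$ because $\varrho$ and the hyperbolicity data vary continuously among measures with the common finite support $B$.

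The main device is the resolvent identity along the segment $p_t=(1-t)p'+tp$, which gives $\partial_t G_{p_t}=G_{p_t}(P_p-P_{p'})G_{p_t}$ and hence $\partial_t\ln G_{p_t}(x,h)=\sum_{s\in B}\sum_{u\in G}\frac{G_{p_t}(x,u)G_{p_t}(us,h)}{G_{p_t}(x,h)}(p-p')(s)$. The inner sum converges absolutely: since $\sum_u G_{p'}(x,u)G_{p'}(u,y)=\sum_{k\ge 0}(k+1)\,{p'}^{(k)}(x^{-1}y)<\infty$ by the spectral gap, and a single step $s\in B$ changes $G_{p'}(us,y)$ by a bounded Harnack factor, no regularization is needed. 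Integrating in $t$ and subtracting the formulas for $x=e$ and $x=g^{-1}$, I obtain, in the limit $h\to\xi$,
$$\vf_p(g,\xi)-\vf_{p'}(g,\xi)=\int_0^1\sum_{s,u}G_{p_t}(us,\xi)\Big(\tfrac{G_{p_t}(e,u)}{G_{p_t}(e,\xi)}-\tfrac{G_{p_t}(g^{-1},u)}{G_{p_t}(g^{-1},\xi)}\Big)(p-p')(s)\,dt,$$
where the ratios are understood in the boundary limit (Martin kernels). This representation is the heart of the argument, and the two factors in each summand carry complementary decay.

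The estimates then proceed in two steps. For the supremum norm, the prefactor $\frac{G_{p_t}(e,u)G_{p_t}(u,\xi)}{G_{p_t}(e,\xi)}$ is $\le Ce^{-c\,d(u,[e,\xi))}$ and so localizes $u$ near the geodesic towards $\xi$, while the bracket vanishes as $u\to\xi$ because both Martin kernels $\frac{G_{p_t}(g^{-1},u)}{G_{p_t}(e,u)}$ converge to $K_{p_t}(g^{-1},\xi)$, with the Ancona–H\"older rate $e^{-c\,(u|\xi)_e}$; since $g$ has bounded length, the product of the two decays confines the sum to a bounded neighbourhood of the basepoints $e,g^{-1}$, yielding $\sup_\xi|\vf_p(g,\xi)-\vf_{p'}(g,\xi)|\le C\|p-p'\|$. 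For the H\"older seminorm one compares $\xi,\eta\in\partial G$ with Gromov product $R=(\xi|\eta)_e$: the geodesics and Martin kernels attached to $\xi$ and to $\eta$ agree up to scale $R$, so in the difference of the above representations only the contribution of points $u$ beyond the branch point survives, and the localization bounds it by $Ce^{-cR}\|p-p'\|\le C\,d(\xi,\eta)^{\a}\|p-p'\|$ once $\a$ is small enough to match $c$ to the parameter of the Gromov metric. Combining the two steps gives $\|\vf_p(g,\cdot)-\vf_{p'}(g,\cdot)\|_\a\le C\|p-p'\|$ for all $p'\in\mathcal U$ and all $g\in B$, which is the assertion.

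The main obstacle is precisely the uniform Ancona machinery: establishing the quasi-multiplicative inequalities and the exponential localization $\frac{G(x,u)G(u,y)}{G(x,y)}\le Ce^{-c\,d(u,[x,y])}$ with constants \emph{uniform} over the neighbourhood $\mathcal U$, since these are what make the resolvent sums converge, localize them near the base, and furnish the geometric decay controlling the H\"older seminorm. This is also the reason the conclusion is only Lipschitz: Ancona's relations are comparisons with constants rather than exact identities, so they control first differences in $p$ but not the smooth dependence that, for free groups, follows from the exact multiplicative structure of the Green function and yields real analyticity.
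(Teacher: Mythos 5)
The survey itself contains no proof of this proposition: it is stated as a quotation from \cite{L3}, so your attempt can only be compared with the argument of that reference. Judged against it, your strategy is essentially the right one and essentially the one used there: represent $\vf_p$ by the Ancona limit of Green-function ratios (Proposition \ref{martin}), differentiate the Green function along the segment $p_t$ via the resolvent identity $\partial_t G_{p_t}=G_{p_t}(P_p-P_{p'})G_{p_t}$, and control the resulting sums by Ancona-type inequalities that are uniform over a neighbourhood of $p$, exploiting the cancellation between the two basepoints $e$ and $g^{-1}$. Your closing remark about why this method can only give Lipschitz (and not analytic) regularity is also accurate.

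Two steps of the sketch, however, are genuinely thinner than they appear, and both sit exactly at the point you flag as ``the main obstacle''. First, the uniform Ancona inequalities (and their strong, H\"older-type form) over a neighbourhood $\mathcal{U}$ are not a consequence of ``continuity of $\varrho$ and of the hyperbolicity data'': for a single $p$ they are already a theorem of Ancona \cite{An}, and their uniformity in $p$ is the theorem-level content of \cite{L3}; as written, your argument assumes the very estimates the cited proof spends most of its effort establishing. Second, the H\"older-seminorm step is not correct as stated: for $u$ before the branch point the summands attached to $\xi$ and to $\eta$ do \emph{not} coincide --- the kernels $K_{p_t}(us,\xi)$ and $K_{p_t}(us,\eta)$ and the normalizations $K_{p_t}(g^{-1},\xi)^{-1}$ and $K_{p_t}(g^{-1},\eta)^{-1}$ all differ --- so it is false that ``only the contribution of points $u$ beyond the branch point survives''. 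What is true, and what the proof needs, is that these summands agree up to factors $1+O(e^{-c(R-n)})$, $n=d(e,u)$, which is precisely the deviation (strong Ancona / \cite{INO}) inequality, and one must then sum these errors. Relatedly, a pointwise bound $Ce^{-c\,d(u,[e,\xi))}$ cannot simply be summed over $u\in G$: if the volume entropy exceeds $c$ (which happens, e.g., when $\varrho$ is close to $1$) the naive sum diverges. Convergence must be organized by projecting $u$ to the geodesic and bounding each fibre by $\sum_{u}G(w,u)G(u,w)=\sum_{k\ge 0}(k+1)p^{(k)}(e)<\infty$ --- the same spectral-gap device you correctly invoke for the absolute convergence of the resolvent sum, but which must be redeployed in every localization estimate, including the one for the H\"older seminorm.
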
 
\begin{corollary} [\cite{L3}] 
If $G$ is a hyperbolic group and  $p$ has finite
  support $B$,  the mappings $p \mapsto h_p $, $p\mapsto \ell _p$  are
  Lipschitz continuous  on $\P(B)$. 
\end{corollary}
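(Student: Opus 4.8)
The plan is to realise both $h_p$ and $\ell_p$ as bilinear pairings of the stationary measure $\nu_p$ against a H\"older integrand, and to show separately that each factor varies in a Lipschitz way with $p$. The common analytic ingredient is the Lipschitz dependence of the measure itself. I would view $\nu_p$ as the bounded linear functional $L_p \in \K_\a^\ast$ given by $L_p(f) = \int_{\partial G} f\, d\nu_p$; since $\nu_p$ is a probability measure, $\|L_p\|_{\K_\a^\ast} \le 1$. The first step is to prove that $p \mapsto L_p$ is locally Lipschitz into $\K_\a^\ast$. The operator $\QQ_p f(\xi) = \sum_{g \in B} f(g^{-1}\xi)\, p(g)$ is \emph{linear} in $p$, so $p \mapsto \QQ_p$ is a real-analytic (indeed affine) map $\P(B) \to \LL(\K_\a)$; since constants are fixed by $\QQ_p$, the value $1$ is an eigenvalue, and $L_p$ spans the corresponding eigenline of the transposed operator $\QQ_p^\ast$ by stationarity. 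For $\a$ small enough this eigenvalue is simple and isolated in the spectrum — the spectral gap coming from the exponential contraction of the boundary action of a non-elementary hyperbolic group, i.e. the isolated-eigenvector statement already invoked in the proof of Proposition \ref{anal}. Analytic perturbation theory for isolated simple eigenvalues then makes $p \mapsto L_p$ real analytic, hence locally Lipschitz.

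For the entropy, formula (\ref{BA}) — valid without (BA) — reads $h_p = \sum_{g \in B} p(g)\, L_p\bigl(\vf_p(g, \cdot)\bigr)$, and by the remark following Proposition \ref{martin} each $\vf_p(g, \cdot)$ lies in $\K_\a$ for $\a$ small. The preceding proposition supplies that $p \mapsto \vf_p(g, \cdot)$ is Lipschitz into $\K_\a$. Lipschitz continuity of the product $p \mapsto L_p(\vf_p(g,\cdot))$ then follows from the standard add-and-subtract estimate
$$ L_p(\vf_p) - L_{p'}(\vf_{p'}) \; = \; L_p(\vf_p - \vf_{p'}) \; + \; (L_p - L_{p'})(\vf_{p'}), $$
bounding the first term by $\|\vf_p - \vf_{p'}\|_\a$ (as $\|L_p\|\le 1$) and the second by $\|L_p - L_{p'}\|_{\K_\a^\ast}\, \|\vf_{p'}\|_\a$, both of which are $O(\|p - p'\|)$ locally. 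Multiplying by the linear, hence Lipschitz, weight $p(g)$ and summing over the finite set $B$ yields Lipschitz continuity of $p \mapsto h_p$. Note that the result is only Lipschitz, not analytic, precisely because the available control on $\vf_p$ is only Lipschitz.

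For the drift the integrand is the horofunction cocycle $\xi \mapsto \xi(g^{-1})$, which \emph{does not} depend on $p$; so the product rule degenerates and, once this cocycle is known to lie in $\K_\a$, only the Lipschitz continuity of $p \mapsto \nu_p$ from the first paragraph is needed. This is exactly the step where (BA) entered in Corollary \ref{drift(BA)}: without it, $\xi \mapsto \xi(g^{-1})$ need not be single-valued on $\partial G$, because $\pi : \ov G \setminus G \to \partial G$ is only finite-to-one. The main obstacle is therefore to run the whole argument one level up, on the Busemann compactification $\ov G$: I would lift $\nu_p$ to its stationary preimage on $\ov G$ (the maximiser in (\ref{escape})), check that $\xi \mapsto \xi(g^{-1})$ is H\"older on $\ov G$ for a suitable metric, and verify that the spectral-gap and perturbation argument of the first paragraph still produces a Lipschitz family of stationary measures on $\ov G$. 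With the integrand a fixed H\"older function and $p \mapsto \nu_p$ Lipschitz into the corresponding dual, the pairing estimate used for the entropy then closes the drift case as well.
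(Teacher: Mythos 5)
Your decomposition of $h_p$ (add-and-subtract, pairing the Lipschitz family $\vf_p(g,\cdot)$ against the measures $\nu_p$, with the weights $p(g)$ pulled out) is indeed the skeleton the paper has in mind: it is the same three-term splitting displayed right after this corollary in connection with Mathieu's $C^1$ theorem, and the preceding Proposition [\cite{L3}] supplies the Lipschitz control of $\vf_p$ exactly as you use it. The gap is in your first paragraph, which is where all the difficulty of the general case sits. The isolated-eigenvector statement you invoke for $\QQ_p^\ast$ on $\K_\a^\ast$ is Bj\"orklund's Lemma 4, and it is proved \emph{under the hypothesis (BA)}; that is precisely why Proposition \ref{anal} and Corollary \ref{drift(BA)} carry the (BA) assumption, while the corollary you are proving does not. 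No spectral gap for the boundary action on $\K_\a(\partial G)$ is established, in this paper or its references, for a general hyperbolic group: the paper records as open whether every hyperbolic group even admits a generating set satisfying (BA), and Mathieu's detour through the Green metric $-\ln F_{p_0}(g,h)$ --- which does satisfy (BA) --- exists exactly to circumvent this obstruction. So ``analytic perturbation theory makes $p\mapsto L_p$ real analytic'' is not available here; note that if it were, you would essentially have removed the (BA) hypothesis from Corollary \ref{drift(BA)}, a far stronger claim than Lipschitz continuity. The cited source \cite{L3} instead obtains the weak-$\ast$ Lipschitz control of $p\mapsto\nu_p$ against H\"older functions by direct probabilistic estimates resting on Ancona's inequalities (Harnack at infinity) holding locally uniformly in $p$ --- the paper itself points to this dependence in its closing remarks --- not by operator perturbation.

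The drift part inherits the same problem and adds another. On $\ov G$, where your integrand $\xi\mapsto\xi(g^{-1})$ does live, the eigenvalue $1$ of $\QQ_p$ need no longer be simple when $\pi$ is only finite-to-one: the paper asserts uniqueness of the stationary measure on the Busemann compactification only under (BA), so without it simple-eigenvalue perturbation theory cannot even be formulated there. Moreover formula (\ref{escape}) is a \emph{maximum} over stationary measures on $\ov G$, so even granted a Lipschitz family of stationary measures you would still have to show that the maximizing value varies in a Lipschitz way. In short, your bilinear-pairing architecture is the right one and matches the paper, but the measure-regularity input --- which is the actual content of \cite{L3} beyond the proposition stated just before the corollary --- is not proved: your argument for it rests on machinery whose hypotheses exclude precisely the groups the corollary is about.
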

\cite{Gi4} proves also regularity properties of the mapping $p \mapsto h_p$ for random
walks on regular languages, which adapt, for instance, to the case of virtually free groups.

The best results of regularity to-date are due to P. Mathieu:
 \begin{proposition}[\cite{Ma2}] If $G$ is a hyperbolic group and   $B$ is finite and symmetric,  the mapping $p \mapsto h_p $ is $C^1$ on the set $\P_\s(B)$ of symmetric probability measures on $B$, endowed with its natural smooth structure. \end{proposition}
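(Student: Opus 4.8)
The plan is to differentiate the entropy formula (\ref{BA}), which holds for every hyperbolic group regardless of (BA). Writing $\langle f,\mu\rangle=\int_{\partial G}f\,d\mu$ for the pairing of $\K_\a$ with its dual $\K_\a^\ast$, it reads
$$ h_p \;=\; \sum_{g\in B} p(g)\,\bigl\langle \vf_p(g,\cdot),\,\nu_p\bigr\rangle, \qquad \vf_p(g,\xi)=-\ln\frac{dg^{-1}_\ast\nu_p}{d\nu_p}(\xi). $$
By Proposition \ref{martin} the integrand is the Busemann cocycle of the Green distance $d_G(g,h)=-\ln F_p(g,h)$, namely $\vf_p(g,\xi)=\lim_{h\to\xi}\bigl(d_G(g^{-1},h)-d_G(e,h)\bigr)$; in particular $h_p$ coincides with the drift $\ell_{p,G}$ of $p$ in the Green metric. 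Since $p\mapsto p(g)$ is linear, the Leibniz rule reduces the whole problem to two $C^1$ statements, for some small $\a$: that $p\mapsto\vf_p(g,\cdot)$ is $C^1$ from $\P_\s(B)$ into $\K_\a$, and that $p\mapsto\nu_p$ is $C^1$ from $\P_\s(B)$ into $\K_\a^\ast$ (i.e. $p\mapsto\langle f,\nu_p\rangle$ is continuously differentiable for each $f\in\K_\a$, with a norm-continuous derivative). Granting both, $p\mapsto h_p$ is $C^1$ with
$$ \nabla_p h_p \;=\; \sum_{g\in B}\bigl\langle \vf_p(g,\cdot),\nu_p\bigr\rangle\,\nabla p(g) \;+\; \sum_{g\in B} p(g)\bigl\langle \nabla_p\vf_p(g,\cdot),\nu_p\bigr\rangle \;+\; \sum_{g\in B} p(g)\bigl\langle \vf_p(g,\cdot),\nabla_p\nu_p\bigr\rangle. $$

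For the stationary measure I would exploit symmetry to put $\nu_p$ in a conformal/Gibbs framework. For $p\in\P_\s(B)$ the walk is reversible, so the Green function is symmetric, $G(g,h)=G(h,g)$, whence $F_p(g,h)=F_p(h,g)$ and $d_G$ is a genuine \emph{symmetric} hyperbolic metric on $G$. The harmonic measure $\nu_p$ is then a conformal (Patterson--Sullivan type) measure for $d_G$, with Radon--Nikodym cocycle exactly the Hölder cocycle $\vf_p$. I would code $\partial G$ by a subshift of finite type adapted to the hyperbolic structure, realize $\nu_p$ as the equilibrium state of an explicit Hölder potential $\psi_p$ built from $F_p$, and use the spectral gap of the Ruelle transfer operator $\LL_{\psi_p}$ on the Hölder functions of the subshift. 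By Ruelle's perturbation theory the equilibrium-state map is analytic in the potential, so the $C^1$ dependence of $\nu_p$ follows once $p\mapsto\psi_p$ is shown to be $C^1$ into that Hölder space.

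This last point, the genuinely differentiable (not merely Lipschitz) dependence of the cocycle $\vf_p$ on $p$, is the step I expect to be the main obstacle, and it is precisely where the unconditional bound of \cite{L3} stops at Lipschitz regularity while (BA) and the isolated-eigenvector picture of Proposition \ref{anal} are unavailable. I would start from the explicit form in Proposition \ref{martin}: for each fixed pair of points $F_p$ is a ratio of Green functions, each a power series in the entries of $p$ with radius of convergence exceeding $1$ because $\varrho(p)<1$, exactly as in Propositions \ref{analfree} and \ref{prop:freeproduct-drift}; hence every $\ln\bigl(F_p(e,h)/F_p(g^{-1},h)\bigr)$ is real analytic in $p$. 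The work is to differentiate under the limit $h\to\xi$ and to obtain a genuinely $\K_\a$-valued derivative, convergent in $\|\cdot\|_\a$. Ancona's inequalities together with the thin-triangle geometry give uniform-in-$p$ exponential control of the Martin kernel, and the Lipschitz estimates of \cite{L3} already encode the needed equicontinuity; the role of symmetry is to supply the reversibility and self-adjoint structure that upgrade these difference quotients to first-order expansions, yielding the linear-response formula for $\nabla_p\vf_p$ and $\nabla_p\nu_p$ announced in \cite{Ma2}. Note that, unlike the word-metric drift, the cocycle $\vf_p$ itself carries $p$-dependence through $d_G$, which is what the middle term of the Leibniz formula accounts for.

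Finally I would assemble the three pieces and verify that each summand is continuous in $p$: the pairings $\langle\vf_p(g,\cdot),\nu_p\rangle$ are continuous because both factors are; $\langle\nabla_p\vf_p(g,\cdot),\nu_p\rangle$ is continuous by the $C^1$ control of $\vf_p$ and continuity of $\nu_p$; and $\langle\vf_p(g,\cdot),\nabla_p\nu_p\rangle$ is continuous because $\vf_p(g,\cdot)\in\K_\a$ varies continuously while $\nabla_p\nu_p\in\K_\a^\ast$ is norm-continuous. The single genuine difficulty is the first-order regularity of the Martin-kernel/harmonic-measure system off (BA), which symmetry is exactly the hypothesis that repairs; the secondary technical point of exchanging $\lim_{h\to\xi}$ with $\nabla_p$ in Ancona's formula is handled by the uniform Ancona estimates above.
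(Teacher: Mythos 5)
Your overall Leibniz decomposition matches the paper's, and your treatment of the term coming from $\nabla p(g)$ is fine, but there is a genuine gap at exactly the point you flag as the ``main obstacle'': your middle term requires the existence of $\nabla_p\vf_p(g,\cdot)$ as a $\K_\a$-valued derivative, i.e.\ genuine $C^1$ dependence of the Martin cocycle on $p$. No mechanism is given for this: the claim that symmetry supplies ``reversibility and self-adjoint structure that upgrade these difference quotients to first-order expansions'' restates the goal rather than proving it. As the paper records, the best unconditional result for hyperbolic groups is Lipschitz continuity of $p\mapsto\vf_p$ into $\K_\a$ (\cite{L3}), and differentiability of the Martin kernel off (BA) is open. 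Your thermodynamic-formalism treatment of $\nu_p$ hits the same wall, since the potential $\psi_p$ you propose is built from $F_p$, so its $C^1$ dependence on $p$ is again the unproved regularity of the Martin kernel.

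Mathieu's proof, as sketched in the paper, is structured precisely to avoid differentiating the cocycle. The same telescoping is performed along a smooth curve $\l\mapsto p_\l$, but the cocycle-difference term is never interpreted as a pairing with a derivative; instead, Proposition \ref{pfft} (a general statement for non-amenable groups and finitely supported measures) asserts that
$$ \lim _{\l \to  0 } \frac{1}{\l} \sum _{g \in B} \left(\int \bigl( \vf _{p_\l} (g, \xi )- \vf _{p_0} (g, \xi )\bigr) d\nu _{p_\l} (\xi) \right) p_\l(g)  \; = \; 0 , $$
so this whole contribution vanishes in the limit and no derivative of $\vf_p$ is needed at all. Symmetry is used elsewhere: it makes the Green distance $-\ln F_{p_0}(g,h)$ a genuine left-invariant metric which satisfies (BA) and a form of hyperbolicity, allowing Proposition \ref{anal} to be extended so that $\l\mapsto\int f\,d\nu_{p_\l}$ is differentiable for $f\in\K_\a$ --- that is, symmetry rescues the harmonic-measure term, not the cocycle term. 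Your plan as written would require proving a regularity statement strictly stronger than what is known, and it misses the cancellation that is the actual content of Mathieu's argument.
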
 
Let  $\l \mapsto p_\l, \l \in [-\e, +\e]$ be a smooth curve in $\P(B)$. We write: 
 \begin{eqnarray*} &  &\lim_{\l \to 0 } \frac{h(p_\l) - h(p_0)}{\l} \\ \; &=&
   \lim_{\l \to 0 } \frac{1}{\l}    \sum _{g \in B} \left(\int_{\partial G}  \vf _{p_\l} (g, \xi )d\nu _{p_\l} (\xi)  p_\l (g) - \int_{\partial G}  \vf _{p_0} (g, \xi )d\nu _{p_0} (\xi)   p_0 (g) \right)\\ \; &=&  \lim _{\l \to  0 } \frac{1}{\l} \sum _{g \in B} \left(\int_{\partial G} ( \vf _{p_\l} (g, \xi )- \vf _{p_0} (g, \xi )) d\nu _{p_\l} (\xi) \right) p_\l (g) + \\
 \; &  & \quad + \sum _{g \in B} \left(\int_{\partial G}  \vf _{p_0} (g, \xi ))[ \lim _{\l \to  0 } \frac{1}{\l} ( d\nu _{p_\l}  - d\nu _{p_0})](\xi )  \right) p_\l (g) + \\
\;  &  & \quad  \quad + \sum _{g \in B} \left(\int_{\partial G}  \vf _{p_0} (g, \xi )) d\nu _{p_0}(\xi )  \right) [ \lim _{\l \to  0 } \frac{1}{\l} (p_\l(g) - p_0(g)] . \end{eqnarray*}
The third  line converges by definition. To prove that the second line converges, P. Mathieu observes that the Green metric $-\ln F_{p_0} (g,h) $ on $G$  satisfies (BA)  and a form of hyperbolicity that allows him to extend Proposition \ref{anal}. More precisely, he shows directly the differentiability of $\l \mapsto \int f(\xi)  d\nu _{p_\l} (\xi) $, for $f \in \K_\a$, and  gives a formula for the derivative. For the first line, P. Mathieu shows a general result for any non-amenable group $G$ and $p_\l $ with finite support. In our case, his result writes:
 \begin{proposition}[\cite{Ma2}] \label{pfft} Let $\l \mapsto p_\l, \l \in [-\e, +\e]$ be a smooth curve in $\P(B)$. Then, 
  $$ \lim _{\l \to  0 } \frac{1}{\l} \sum _{g \in B} \left(\int ( \vf _{p_\l} (g, \xi )- \vf _{p_0} (g, \xi )) d\nu _{p_\l} (\xi) \right) p_\l(g)  \; = \; 0 .$$
  \end{proposition}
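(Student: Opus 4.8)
The plan is to read the stated quantity as $\lim_{\l\to 0}\frac1\l D(\l)$, where
$$ D(\l)\;:=\;\sum_{g\in B}\left(\int_{\partial G}\big(\vf_{p_\l}(g,\xi)-\vf_{p_0}(g,\xi)\big)\,d\nu_{p_\l}(\xi)\right)p_\l(g), $$
and to show that this derivative vanishes for a structural reason. Set $\sigma_\l(g,\xi):=\vf_{p_\l}(g,\xi)-\vf_{p_0}(g,\xi)$. Each $\vf_p$ is an additive cocycle over the action of $G$ on $\partial G$, that is $\vf_p(g_1g_2,\xi)=\vf_p(g_1,g_2\xi)+\vf_p(g_2,\xi)$, so $\sigma_\l$ is one as well, and $\sigma_0\equiv 0$. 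The first observation is that if $\sigma_\l$ were a genuine coboundary $\sigma_\l(g,\xi)=u_\l(g\xi)-u_\l(\xi)$ with $u_\l\in L^1(\nu_{p_\l})$, then $D(\l)$ would vanish \emph{identically}: by $p_\l$-stationarity of $\nu_{p_\l}$ one has $\sum_g p_\l(g)\int u_\l(g\xi)\,d\nu_{p_\l}(\xi)=\int u_\l\,d\nu_{p_\l}=\sum_g p_\l(g)\int u_\l(\xi)\,d\nu_{p_\l}(\xi)$. The whole difficulty is that $\nu_{p_\l}$ and $\nu_{p_0}$ are mutually singular for $\l\neq 0$, so $u_\l=\ln(d\nu_{p_0}/d\nu_{p_\l})$ does not exist globally; only an infinitesimal version of this cancellation survives.

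To extract it, I would use a first-order expansion of the cocycle. Granting that $\l\mapsto\vf_{p_\l}(g,\cdot)$ is differentiable at $0$ into $\K_\a$, with derivative $\psi(g,\cdot):=\frac{d}{d\l}\big|_{\l=0}\vf_{p_\l}(g,\cdot)$, one has $\sigma_\l(g,\cdot)=\l\,\psi(g,\cdot)+o(\l)$ uniformly on $\partial G$ (the set $B$ being finite). Since the $\nu_{p_\l}$ are probability measures, dividing by $\l$ and using weak-$*$ continuity of $\l\mapsto\nu_{p_\l}$ together with $p_\l\to p_0$ gives
$$ \lim_{\l\to 0}\frac1\l D(\l)\;=\;S\;:=\;\sum_{g\in B}\left(\int_{\partial G}\psi(g,\xi)\,d\nu_{p_0}(\xi)\right)p_0(g). $$
It remains to prove $S=0$, which is the clean algebraic core. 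Differentiating the identity $\vf_{p_\l}(g^{-1},g\xi)=-\vf_{p_\l}(g,\xi)$ (a special case of the cocycle relation, since $\vf_{p_\l}(e,\cdot)=0$) yields $\psi(g^{-1},g\xi)=-\psi(g,\xi)$. Stationarity of $\nu_{p_\l}$ reads $\sum_g p_\l(g)\,e^{-\vf_{p_\l}(g^{-1},\xi)}=1$, and since $\nu_{p_\l}$ has full support and the left-hand side is continuous in $\xi$, this holds for \emph{every} $\xi\in\partial G$. Differentiating it at $\l=0$ and integrating against $\nu_{p_0}$ gives $\sum_g\dot p(g)-C=0$, where $\dot p(g):=\frac{d}{d\l}\big|_{\l=0}p_\l(g)$ and $C:=\sum_g p_0(g)\int\psi(g^{-1},\xi)e^{-\vf_{p_0}(g^{-1},\xi)}\,d\nu_{p_0}(\xi)$, using the normalization $\int e^{-\vf_{p_0}(g^{-1},\xi)}\,d\nu_{p_0}(\xi)=1$. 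As $\sum_g\dot p(g)=0$ we get $C=0$; and writing $e^{-\vf_{p_0}(g^{-1},\xi)}\,d\nu_{p_0}(\xi)=d(g_*\nu_{p_0})(\xi)$, changing variables, and applying $\psi(g^{-1},g\eta)=-\psi(g,\eta)$ identifies $C=-S$. Hence $S=0$.

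The main obstacle is the analytic input behind the expansion $\sigma_\l=\l\psi+o(\l)$, namely the differentiability of $p\mapsto\vf_p$ into $\K_\a$. This is available for free groups (\cite{L2}), but for a general hyperbolic group only Lipschitz continuity of $p\mapsto\vf_p$ is known (\cite{L3}), so direct differentiation is not justified. This is precisely why the statement is proved as a \emph{general} fact for non-amenable $G$: instead of differentiating the cocycle, one represents $D(\l)$ through the random walk as $D(\l)=\lim_n\frac1n\,\E_{P_\l}\big[\sigma_\l(X_n,X_n^{-1}X_\infty)\big]$ with $P_\l=p_\l^{\otimes\N}$, by telescoping $\sigma_\l$ along the trajectory and applying Birkhoff's theorem, and then uses Ancona's almost-multiplicativity of the Green function along the trajectory (\cite{An}) to rewrite this as $h_{p_\l}-\gamma(\l)$, where $\gamma(\l):=\lim_n\frac1n\big(-\ln F_{p_0}(e,X_n)\big)$ is the drift of the $p_\l$-walk in the fixed $p_0$-Green metric $-\ln F_{p_0}$. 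Applying the fundamental inequality (\ref{fundamental}) to that metric, whose volume entropy is $1$ (\cite{BHM1}), yields $D(\l)\le 0=D(0)$, so $\l=0$ is a maximum of $D$; the matching lower bound $D(\l)\ge -o(\l)$, which forces $\frac1\l D(\l)\to 0$, is the genuinely hard estimate and is obtained by running the infinitesimal stationarity cancellation above uniformly along the random walk.
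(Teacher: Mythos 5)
The paper itself contains no proof of Proposition \ref{pfft}: it is quoted from Mathieu's preprint \cite{Ma2}, so your proposal has to be judged on its own terms, and on those terms it has a genuine gap. Your main argument (the cancellation $S=0$) is algebraically correct --- the cocycle identity, the differentiated stationarity relation $\sum_g p_\l(g)e^{-\vf_{p_\l}(g^{-1},\xi)}=1$, and the change of variables identifying $C=-S$ all check out --- but it is entirely conditional on the first-order expansion $\vf_{p_\l}=\vf_{p_0}+\l\psi+o(\l)$ uniformly on $\partial G$. As you yourself concede, such differentiability of $p\mapsto\vf_p$ is known only for free groups \cite{L2}; for general hyperbolic groups only Lipschitz continuity is available \cite{L3}, and Mathieu's statement is for arbitrary non-amenable $G$. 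This is not a removable technicality: the proposition exists in the paper's decomposition of $\frac{1}{\l}(h(p_\l)-h(p_0))$ precisely because one cannot differentiate $\vf_p$ in $p$; if one could, your computation plus the differentiability of $\l\mapsto\nu_{p_\l}$ would make the whole $C^1$ theorem follow by directly differentiating formula (\ref{BA}). So the conditional argument, however clean, proves the statement only in a regime where it was never needed.

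Your fallback sketch is closer to a real proof but stops halfway. The identification $D(\l)=h_{p_\l}-\gamma(\l)$, with $\gamma(\l)$ the drift of the $p_\l$-walk in the $p_0$-Green metric, combined with the fundamental inequality (\ref{fundamental}) for that metric (volume entropy $1$ by \cite{BHM1}; note this, as used in the paper, requires $p_0$ symmetric, while the proposition is stated for curves in $\P(B)$), yields only $D(\l)\le 0=D(0)$, i.e.\ $\limsup_{\l\to0^+}\frac{1}{\l}D(\l)\le 0$ and $\liminf_{\l\to0^-}\frac{1}{\l}D(\l)\ge 0$. Knowing that $\l=0$ is a maximum of $D$ says nothing about differentiability of $D$ at $0$: the entire content of the proposition is the matching bound $D(\l)\ge -o(|\l|)$, which you explicitly label ``the genuinely hard estimate'' and then do not prove. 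The phrase ``running the infinitesimal stationarity cancellation uniformly along the random walk'' cannot serve as an argument here, because that cancellation was derived under the differentiability hypothesis you had just discarded as unavailable. A workable route to the lower bound requires a genuinely new quantitative input --- for instance a relative-entropy (Kullback--Leibler) comparison between the laws of the $p_\l$- and $p_0$-walks, which is $O(\l^2)$ along a smooth curve --- and nothing of that sort appears in the proposal. As it stands, the proposal establishes the easy inequality and the formal first-order heuristic, but not the proposition.
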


 It is likely that the function $p \mapsto h_p$ has more regularity on $\P(B)$, but this is an open problem.
 
 \

 Another natural extension is towards more general families of probability
 measures on $G$. Proposition \ref{analfree} is valid for $p$ varying in finite
 dimensional affine subsets of $\{ p ; \sum _{g\in G} e^{\g|g|} p(g) <+\infty
 \}$ for some $\g>0 $ (see \cite {Le}). The other properties rest on Harnack
 inequality at infinity (see \cite {An}), which has been proven only for
 probability measures with finite support on hyperbolic groups.  Finally, let
 $\P^1(G) $ be the set of probabilities on $G$ satisfying $\sum_{g\in G} |g| p(g) <+\infty $ endowed with the topology of convergence on the functions which grow slower than $C|g|$ at infinity. The first  observation on this topic of regularity of the entropy is the fact that, if $G$ is hyperbolic, $p \mapsto h_p $ is continuous on $\P^1(G)$ (\cite {EK}).
 
 \

\small

\end{document}